\def\JPicScale{1.0}\fi
\theoremstyle{plain}
   \newtheorem{theorem}{Theorem}[section]
   \newtheorem{proposition}[theorem]{Proposition}     
   \newtheorem{lemma}[theorem]{Lemma}
   \newtheorem{corollary}[theorem]{Corollary}
   \newtheorem{question}[theorem]{Question}
\theoremstyle{definition}
   \newtheorem*{ack}{Acknowledgements}
   \newtheorem{example}[theorem]{Example}
   \newtheorem{remark}[theorem]{Remark}
\newcommand{\RR}{\mathbb{R}}
\newcommand{\PP}{\mathbb{P}}
\newcommand{\quotient}{/\hspace{-1.2mm}/}
\newcommand{\ChowQ}{/\hspace{-1.2mm}/_{Ch}}
\newcommand{\M}{\overline{M}}
\newcommand{\PGL}{\operatorname{PGL}}
\newcommand{\Chow}{\operatorname{Chow}}
\newcommand{\Gr}{\operatorname{Gr}}
\newcommand{\Proj}{\operatorname{Proj}}
\newcommand{\Spec}{\operatorname{Spec}}
\newcommand{\Hom}{\operatorname{Hom}}
\newcommand{\Conv}{\operatorname{Conv}}
\newcommand{\Bl}{\operatorname{Bl}}
\newcommand{\GL}{\operatorname{GL}}
\newcommand{\Ga}{\mathbb{G}_a}
\newcommand{\Gm}{\mathbb{G}_m}
\newcommand{\Aut}{\operatorname{Aut}}
\newcommand{\vol}{\operatorname{vol}}
\numberwithin{theorem}{section}
\begin{document}

\title{Chow quotients of Grassmannians by diagonal subtori}
\author{Noah Giansiracusa$^*$} 
\author{Xian Wu$^{**}$}
\address{$^*$Assistant Professor of Mathematical Sciences, Bentley University} 
\address{$^{**}$Postdoc, Jagiellonian University, Poland}
\email{ngiansiracusa@bentley.edu, xianwu@uga.edu}

\begin{abstract}
The literature on maximal torus orbits in the Grassmannian is vast; in this paper we initiate a program to extend this to diagonal subtori.  Our main focus is generalizing portions of Kapranov's seminal work on Chow quotient compactifications of these orbit spaces.  This leads naturally to discrete polymatroids, generalizing the matroidal framework underlying Kapranov's results.  By generalizing the Gelfand-MacPherson isomorphism, these Chow quotients are seen to compactify spaces of arrangements of parameterized linear subspaces, and a generalized Gale duality holds here.  A special case is birational to the Chen-Gibney-Krashen moduli space of pointed trees of projective spaces, and we show that the question of whether this birational map is an isomorphism is a specific instance of a much more general question that hasn't previously appeared in the literature, namely, whether the geometric Borel transfer principle in non-reductive GIT extends to an isomorphism of Chow quotients.
\end{abstract}

\maketitle

\section{introduction}

The literature on maximal torus orbits in the Grassmannian and the torus-equivariant geometry (cohomology, K-theory, etc.) of the Grassmannian is extensive; it is a rich field beautifully interweaving combinatorics, representation theory, and geometry, with many applications across these disciplines.  One of the seminal works is Kapranov's paper on Chow quotients in which he compactifies the space of maximal torus orbit closures \cite{Kap93Chow}.  The goal of the present paper is to initiate a program of studying diagonal subtorus orbits in the Grassmannian; we focus here on extending portions of Kapranov's paper to this setting and explore some consequences.

\subsection{Setup and notation}
Fix a base field $k$.  By a \emph{diagonal subtorus} $S$ we mean that coordinates in the maximal torus $T=(k^\times)^n$ acting on $\Gr(d,n)$ are allowed to coincide; that is, $S = (k^\times)^m$ for $m \le n$ and we have an inclusion map $S \hookrightarrow T$ given by a matrix whose rows are all standard basis vectors.  Up to permutation, every such subtorus is of the form \[S = \{(\underbrace{t_1,\ldots,t_1}_{r_1},\underbrace{t_2,\ldots,t_2}_{r_2},\ldots,\underbrace{t_m,\ldots,t_m}_{r_m}) ~|~ t_i \in k^\times\} \subseteq T,\] where $\sum r_i = n$.  Setting $r_i = 1$ for all $i$ recovers Kapranov's case of the maximal torus.  In essence, the combinatorics in Kapranov's paper (matroids, matroid subdivisions, etc.) are generalized by replacing the set $[n] = \{1,2,\ldots,n\}$ with the multiset \[[\vec{r}] := \{\underbrace{1,\ldots,1}_{r_1},\underbrace{2,\ldots,2}_{r_2},\ldots,\underbrace{m,\ldots,m}_{r_m}\}\] where $i$ has multiplicity $r_i$.  (Matroids on multisets appear in the literature under the name discrete polymatroids \cite{HH02}.)  The hypersimplex $\Delta(d,n) \subseteq \RR^n$, a polytope playing a fundamental role in Kapranov's paper, is replaced with its projection under the linear map \[\lambda_{\vec{r}} : \RR^n \rightarrow \RR^m\] given by the matrix $|e_1~\cdots~e_1~e_2~\cdots~e_2~\cdots~e_m~\cdots~e_m|$, the transpose of the matrix defining the inclusion $S \hookrightarrow T$.  These vague assertions will be made precise in what follows.

\subsection{Results}

The $T$-orbit closure of any $k$-point of $\Gr(d,n) \subseteq \PP^{\binom{n}{d}-1}$ is a polarized toric variety whose corresponding lattice polytope is a subpolytope of the hypersimplex
\[\Delta(d,n) = \{(a_1,\ldots,a_n)~|~a_i\in[0,1]\text{ and }\sum a_i = d \} \subseteq \RR^n.\]  This subpolytope has its vertices and edges among those of $\Delta(d,n)$; subpolytopes with this property are called matroid polytopes and are known to be in bijection with rank $d$ matroids on $[n]$, with matroids representable over $k$ identified with the polytopes of $T$-orbit closures in $\Gr(d,n)$  \cite{GGMS87}.  This perspective of matroid polytopes is a relatively recent advance in matroid theory that has fruitfully brought the subject closer to algebraic geometry (cf. \cite[\S1]{Volume}).  Via diagonal subtori, this story extends seamlessly to discrete polymatroids:

\begin{theorem}\label{thm:matroid}
Rank $d$ discrete polymatroids on the multiset $[\vec{r}]$ are in bijection with subpolytopes of $\lambda_{\vec{r}}(\Delta(d,n)) \subseteq \RR^m$ whose vertices and edges are among the images of the vertices and edges of $\Delta(d,n)$; moreover, this bijection identifies the discrete polymatroids representable over $k$ with the lattice polytopes corresponding to $S$-orbit closures in $\Gr(d,n)$.
\end{theorem}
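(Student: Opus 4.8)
The plan is to establish the combinatorial bijection first, then match it with representability. I would send a rank $d$ discrete polymatroid $\mathcal{P}$ on $[\vec{r}]$ to the convex hull $\Gamma(\mathcal{P}) \subseteq \RR^m$ of its set of bases $\mathcal{B}(\mathcal{P}) \subseteq \NN^m$, with candidate inverse sending a polytope $P$ to the discrete polymatroid whose bases are the lattice points $P \cap \ZZ^m$. The first step is to unwind the hypothesis on $P$. Since $\lambda_{\vec{r}}(\Delta(d,n)) = \{x \in \RR^m \mid 0 \le x_i \le r_i,\ \sum_i x_i = d\}$, its lattice points are exactly the images $\lambda_{\vec{r}}(e_B)$ of vertices of $\Delta(d,n)$, while each edge of $\Delta(d,n)$ maps to a segment in a root direction $e_i - e_j$ (with $i \ne j$) and these images cover every such lattice segment inside $\lambda_{\vec{r}}(\Delta(d,n))$. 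One then checks that ``$P$ is a subpolytope of $\lambda_{\vec{r}}(\Delta(d,n))$ whose vertices and edges are among the images of those of $\Delta(d,n)$'' is equivalent to ``$P \subseteq \lambda_{\vec{r}}(\Delta(d,n))$, every vertex of $P$ is a lattice point, and every edge of $P$ is parallel to some $e_i - e_j$''.

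Next I would verify the two maps are mutually inverse. That $\Gamma(\mathcal{P})$ has the required form is immediate: each base $\beta$ satisfies $0 \le \beta_i \le \rho_{\mathcal{P}}(\{i\}) \le r_i$ and $\sum_i \beta_i = d$, so $\Gamma(\mathcal{P}) \subseteq \lambda_{\vec{r}}(\Delta(d,n))$; and $\Gamma(\mathcal{P})$ is an integer polymatroid base polytope, hence a generalized permutohedron, so its vertices are lattice points and all its edges lie in root directions \cite{HH02}. For the converse, a polytope whose edges all lie in root directions is a generalized permutohedron, so $P = \{x \mid \sum_{i\in A} x_i \le z_A \text{ for all } A \subseteq [m],\ \sum_i x_i = z_{[m]}\}$ with $z_A := \max_{x\in P}\sum_{i\in A}x_i$; this $z$ is submodular, integer-valued (each maximum is attained at a lattice vertex), monotone (as $P$ lies in the positive orthant), and satisfies $z_{[m]} = d$ and $z_{\{i\}} \le r_i$, hence is the rank function of a rank $d$ discrete polymatroid on $[\vec{r}]$ with base polytope exactly $P$. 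Finally, a polymatroid base polytope is a lattice polytope all of whose lattice points are bases, so the two assignments invert one another. (Alternatively, surjectivity can be proved by imitating the Gelfand--Goresky--MacPherson--Serganova argument, checking the symmetric exchange axiom for $P \cap \ZZ^m$ directly via edge paths in $P$.) I expect this surjectivity statement — the discrete-polymatroidal analogue of GGMS, together with the matching fact that the edge condition is equivalent to the normal fan coarsening the braid fan — to be the main obstacle.

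For the ``moreover'' clause, the inclusion $S \hookrightarrow T$ dualizes to $\lambda_{\vec{r}} \colon X^*(T) = \ZZ^n \to \ZZ^m = X^*(S)$, so the Plücker coordinate $p_B$, of $T$-weight $e_B$, has $S$-weight $\lambda_{\vec{r}}(e_B)$; hence $\overline{Sx} \subseteq \PP^{\binom{n}{d}-1}$ is the (possibly non-normal) toric variety with weight polytope $\Conv\{\lambda_{\vec{r}}(e_B) \mid p_B(x) \neq 0\} = \lambda_{\vec{r}}(P_{M_x})$, where $M_x$ is the matroid of $x$. It remains to identify $\{\lambda_{\vec{r}}(P_{M_x}) \mid x \in \Gr(d,n)(k)\}$ with $\{\Gamma(\mathcal{P}) \mid \mathcal{P}\ k\text{-representable}\}$. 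Given a representation of $\mathcal{P}$ by subspaces $V_1,\dots,V_m$ of $k^d$ with $\dim \sum_{i\in A}V_i = \rho_{\mathcal{P}}(A)$, I would take a basis of each $V_i$, pad with $r_i - \dim V_i$ zero vectors, and read the resulting full-rank $d\times n$ matrix as a point $x \in \Gr(d,n)(k)$; then $\lambda_{\vec{r}}(P_{M_x}) \subseteq \Gamma(\mathcal{P})$ because $A \mapsto \dim\sum_{i\in A}V_i$ is the defining rank function of $\mathcal{P}$, and the reverse inclusion holds because Rado's theorem realizes each base $\beta$ of $\mathcal{P}$ by an independent set meeting the $i$-th block of columns in exactly $\beta_i$ entries. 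Conversely, from $x \in \Gr(d,n)(k)$ one sets $V_i$ to be the span of the $i$-th block of columns of a matrix for $x$, obtaining a $k$-representable discrete polymatroid on $[\vec{r}]$ whose base polytope is $\lambda_{\vec{r}}(P_{M_x})$ by the same computation; injectivity from the first part then shows it is the unique $\mathcal{P}$ with $\Gamma(\mathcal{P}) = \lambda_{\vec{r}}(P_{M_x})$, which completes the correspondence.
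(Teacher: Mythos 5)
Your argument is correct in substance but follows a genuinely different route from the paper's. The paper explicitly notes two possible strategies---adapting the arguments of \cite{GGMS87} directly, or reducing to them via a projection---and takes the latter: it lifts a discrete polymatroid $B$ to the matroid $\pi_{\vec{r}}^{-1}(B)$ on $[n]$, applies the classical matroid-polytope theorem there, and pushes the resulting polytope forward along $\lambda_{\vec{r}}$ (with surjectivity obtained by pulling a subpolytope of $\lambda_{\vec{r}}(\Delta(d,n))$ back to a matroid polytope in $\Delta(d,n)$); for the representability clause it simply factors the moment map as $\mu_S=\lambda_{\vec{r}}\circ\mu_T$, so that $\Delta_{\overline{S\cdot L}}=\lambda_{\vec{r}}(\Delta_{\overline{T\cdot L}})$ and everything reduces to the known statement for maximal-torus orbits. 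You instead work intrinsically in $\RR^m$: you characterize the admissible polytopes as lattice polytopes in the box slice with all edges in root directions, recover the polymatroid from the submodular function $z_A=\max_{x\in P}\sum_{i\in A}x_i$, and construct representations by hand (spanning sets of the $V_i$, zero-padding, Rado's theorem). The paper's route buys economy---every nontrivial combinatorial fact is outsourced to \cite{GGMS87} and \cite{HH02}, and representability becomes a one-line moment-map computation---while yours makes the intrinsic polytopal characterization explicit. The one place you should be careful is the equivalence ``all edges in root directions $\Leftrightarrow$ base polytope of an integer polymatroid,'' which is precisely the polymatroid analogue of the GGMS theorem; you rightly flag it as the crux, but it needs a precise citation (it is standard in the generalized-permutohedron literature) or the edge-path argument you sketch in parentheses carried out. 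Your disambiguation of the statement's edge condition (edges of $P$ parallel to some $e_i-e_j$, i.e., unions of collinear images of edges, rather than single such images) is the correct reading---the free polymatroid $\lambda_{\vec{r}}(\Delta(d,n))$ itself has edges of lattice length greater than one, as in the paper's trapezoid example---and is worth stating as explicitly as you do.
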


Now let $k = \mathbb{C}$.  Kapranov's idea for compactifying the space of maximal torus orbit closures in $\Gr(d,n)$ is to take a sufficiently small $T$-invariant Zariski open locus $U\subseteq \Gr(d,n)$ such that the $T$-action on $U$ is free and there is an inclusion $U/T \hookrightarrow \Chow(\Gr(d,n))$ sending each torus orbit to its Zariski closure, viewed as an algebraic cycle on the Grassmannian.  The closure of the image of this embedding in the Chow variety is by definition the Chow quotient $\Gr(d,n)\ChowQ T$ \cite{Kap93Chow}.  We can apply the same idea here and study the diagonal subtorus Chow quotient $\Gr(d,n)\ChowQ S$.  We compute some explicit examples of this Chow quotient, together with its natural closed embedding in the toric Chow quotient $\PP^{\binom{n}{d}-1}\ChowQ S$, in \S\ref{sec:examples}.

Kapranov shows \cite[Theorem 1.6.6]{Kap93Chow} that the rational maps sending a linear space to its intersection with, and projection onto, a coordinate hyperplane induce morphisms \[\Gr(d,n)\ChowQ T \rightarrow \Gr(d-1,n-1)\ChowQ T'\text{ and }\Gr(d,n)\ChowQ T \rightarrow \Gr(d,n-1)\ChowQ T',\] respectively, where $T' = (k^\times)^{n-1}$ is the maximal torus acting on these smaller Grassmannians.  We have the following extension of this to the subtorus setting:

\begin{theorem}\label{thm:hypersimplicial}
Fix an index $1 \le i \le m$, let $I \subseteq [n]$ index the $r_i$ coordinates of $S$ corresponding to the $i^{\text{th}}$ $\Gm$-factor, and let $S_i$ denote the rank $m-1$ torus given by projecting $S$ onto the complement of the $I$-coordinates.  Then intersection with, and projection onto, the codimension $r_i$ coordinate linear space defined by $x_j = 0$ for all $j\in I$ induce morphisms
\[\Gr(d,n)\ChowQ S \rightarrow \Gr(d-r_i,n-r_i)\ChowQ S_i \text{ and } \Gr(d,n)\ChowQ S \rightarrow \Gr(d,n-r_i)\ChowQ S_i,\] 
respectively.
\end{theorem}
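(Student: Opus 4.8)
The plan is to run Kapranov's proof of \cite[Theorem 1.6.6]{Kap93Chow}, with the combinatorics of $[n]$ systematically replaced by that of $[\vec{r}]$. Let $W = \{x_j = 0 : j \in I\} \subseteq k^n$, a coordinate linear space of codimension $r_i$ whose distinguished coordinates are indexed by $[n]\setminus I$, and let $p\colon k^n \to W$ be the projection killing the coordinates in $I$. On the Zariski-open loci where $L + W = k^n$, respectively where $L \cap \langle e_j : j\in I\rangle = 0$, the assignments $L \mapsto L \cap W$ and $L \mapsto p(L)$ are rational maps $\Gr(d,n)\dashrightarrow \Gr(d-r_i,n-r_i)$ and $\Gr(d,n)\dashrightarrow\Gr(d,n-r_i)$, each equivariant for the quotient $T=(\Gm)^n \twoheadrightarrow (\Gm)^{[n]\setminus I}$; restricted to $S \hookrightarrow T$ this quotient factors as $S \twoheadrightarrow S_i \hookrightarrow (\Gm)^{[n]\setminus I}$, so both maps are $S$-equivariant with $S$ acting on the target through $S \twoheadrightarrow S_i$, and each carries an $S$-generic point to an $S_i$-generic one. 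They therefore descend to rational maps of Chow quotients defined on the free locus $U/S$, and the theorem asserts these extend to everywhere-defined morphisms. I would verify the extension inside the closed embedding $\Gr(d,n)\ChowQ S \hookrightarrow \PP^{\binom{n}{d}-1}\ChowQ S$, using Kapranov's description of the toric Chow quotient $\PP^{\binom{n}{d}-1}\ChowQ S$ as the toric variety governed by the polytope $Q := \lambda_{\vec{r}}(\Delta(d,n)) \subseteq \RR^m$: its torus-fixed boundary strata correspond to regular subdivisions of $Q$, whose cells, by Theorem \ref{thm:matroid}, are discrete polymatroid polytopes.

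The geometric heart is that the two target polytopes are faces of $Q$. Writing $y_1,\dots,y_m$ for the coordinates on $\RR^m$, one has $\lambda_{\vec{r}}(a)_i = \sum_{j\in I} a_j \le r_i$ on $\Delta(d,n)$, with equality exactly when $a_j = 1$ for all $j\in I$; thus $Q \cap \{y_i = r_i\}$ is the image of the face of $\Delta(d,n)$ cut out by $\{a_j = 1 : j \in I\}$, and identifying that face with $\Delta(d-r_i,n-r_i)$ and applying $\lambda_{\vec{r}}$ exhibits $Q \cap \{y_i = r_i\} = \lambda_{\vec{r}'}(\Delta(d-r_i,n-r_i))$, where $\vec{r}' := (r_1,\dots,r_{i-1},r_{i+1},\dots,r_m)$. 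Symmetrically $Q\cap\{y_i = 0\} = \lambda_{\vec{r}'}(\Delta(d,n-r_i))$. Under these identifications, contraction by $I$ and deletion of $I$ for discrete polymatroids on $[\vec{r}]$ correspond to restricting polymatroid polytopes, and regular polymatroidal subdivisions, to the faces $\{y_i = r_i\}$ and $\{y_i = 0\}$ of $Q$.

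Next I would follow Kapranov's mechanism. Restriction of regular subdivisions to a face $F$ of $Q$ is induced by the linear restriction map on weight space and carries the secondary fan of $Q$ into that of $F$, hence induces a morphism of toric Chow quotients; for $F = Q\cap\{y_i = r_i\}$, whose vertices $\lambda_{\vec{r}}(e_B)$ (writing $e_B$ for the $0/1$ indicator vector of $B$) are those with $I\subseteq B$, and which therefore spans the coordinate subspace $\PP^{\binom{n-r_i}{d-r_i}-1}\subseteq\PP^{\binom{n}{d}-1}$, this gives a morphism $\PP^{\binom{n}{d}-1}\ChowQ S \to \PP^{\binom{n-r_i}{d-r_i}-1}\ChowQ S_i$, and for $F = Q\cap\{y_i = 0\}$ (vertices indexed by $d$-subsets disjoint from $I$) a morphism $\PP^{\binom{n}{d}-1}\ChowQ S \to \PP^{\binom{n-r_i}{d}-1}\ChowQ S_i$. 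On the dense torus these are the descents of the coordinate projections $[v_B]_B \mapsto [v_{I\sqcup B'}]_{B'}$ and $[v_B]_B \mapsto [v_B]_{B\cap I=\emptyset}$. Restricting to $\Gr(d,n)\ChowQ S = \overline{U/S}$: on $U/S$ the first morphism computes $L \mapsto L\cap W$ and the second $L\mapsto p(L)$, since the Plücker vector of $L\cap W$ relative to the coordinates of $W$ is $(p_{I\sqcup B'}(L))_{B'}$ up to the standard Plücker sign reindexing (itself induced by a diagonal automorphism of $W$, hence harmless), and that of $p(L)$ is $(p_B(L))_{B\cap I = \emptyset}$; therefore the image of $\Gr(d,n)\ChowQ S$ lies in the closure over $U$ of the locus swept out by $L\cap W$, respectively $p(L)$, which is contained in the closed subvariety $\Gr(d-r_i,n-r_i)\ChowQ S_i$, respectively $\Gr(d,n-r_i)\ChowQ S_i$. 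This yields the two asserted morphisms.

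The main obstacle is exactly the one Kapranov confronts in the maximal-torus case: the coordinate projections above are genuinely rational, undefined on $\PP^{\binom{n}{d}-1}$ along the coordinate subspace where all face-coordinates vanish, and the content is that they become everywhere-defined after passing to the Chow quotient. Here one uses that a boundary point is a sum of toric components indexed by a subdivision, and that a component whose polytope meets the relevant face only in a lower-dimensional face contributes nothing to the image cycle of the correct dimension, so that image cycle is precisely the one read off from the restricted subdivision and varies algebraically. I expect this to transfer essentially verbatim, since its only structural input --- the behavior of secondary fans under restriction to a face --- is insensitive to whether the ground set is $[n]$ or the multiset $[\vec{r}]$; the care required is in matching the precise (possibly non-normal) definitions of the source and target Chow quotients across the map, in the elementary but sign-sensitive Plücker identities, and in shrinking $U$ so that $L\cap W$ and $p(L)$ land where the target Chow quotients are constructed.
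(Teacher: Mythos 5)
Your polytopal groundwork is sound and matches the paper's: the faces $Q\cap\{y_i=r_i\}$ and $Q\cap\{y_i=0\}$ are exactly the images $\lambda_{\vec{r}}(\Gamma_I^{\pm})$ of the faces of $\Delta(d,n)$ that the paper works with, and the Pl\"ucker bookkeeping $p_{B'}(L\cap W)=\pm p_{I\sqcup B'}(L)$ is \cite[Lemma 1.6.13]{Kap93Chow}, which the paper also invokes. But the paper does not construct a toric morphism at all. It applies the valuative criterion of \cite[Theorem 7.3]{GG14Kap}: it suffices to show that for a one-parameter family $Z_t$ of generic orbit closures the limit $\lim_{t\to 0}a_i(Z_t)$ depends only on $Z_0$, and this is done by proving the explicit identity $\lim_{t\to 0}a_i(Z_t)=Z_0\cap\Pi_I^{+}$ (likewise for $b_i$ with $\Pi_I^{-}$). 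The containment $\subseteq$ is elementary topology; equality is forced by a dimension count (a stabilizer argument ruling out components $L_j\subseteq\Pi_I^{\pm}$) together with a degree count comparing lattice volumes of the pieces of the polyhedral decomposition of $\lambda_{\vec{r}}(\Gamma_I^{+})$ supplied by Proposition \ref{prop:cycles}. Your volume observations are essentially that same degree computation, so the combinatorial content of the two arguments overlaps more than your framing suggests.

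The genuine gap is the step ``restriction of regular subdivisions to a face \ldots\ hence induces a morphism of toric Chow quotients.'' That single ``hence'' is where the entire difficulty of the theorem is concentrated, and it is asserted rather than proved. The fan-level fact (restriction of weight functions carries secondary cones into secondary cones) is standard, but the toric Chow quotients here are defined as closures inside Chow varieties, polarized by secondary polytopes and possibly non-normal; a map of fans does not automatically produce a morphism of such projectively embedded varieties, and what actually has to be shown is that the assignment ``limit cycle $\mapsto$ cycle read off from the restricted subdivision'' is a morphism of the Chow-variety closures --- which is the same species of statement as the theorem itself. Your final paragraph acknowledges this but defers it to ``Kapranov's mechanism,'' which (as the paper points out) is a direct analysis of Chow forms, not the secondary-fan functoriality you invoke, so nothing in the literature you cite discharges the step. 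To close the gap you would either have to prove the face-restriction functoriality of toric Chow quotients --- at which point you may as well run the valuative-criterion argument, which handles the Grassmannian quotient directly in one pass --- or establish the limit identification $\lim_{t\to 0}a_i(Z_t)=Z_0\cap\Pi_I^{\pm}$ as the paper does.
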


Kapranov's proof directly analyzes Chow forms to demonstrate their polynomial dependence, whereas we use polytopal subdivisions to apply a valuative criterion for regularity; thus, we obtain in particular a new variant of Kapranov's proof in the case of the maximal torus.

The Gelfand-MacPherson correspondence identifies generic torus orbits in the Grassmannian with generic general linear group orbits in a product of projective spaces, and Kapranov shows \cite[Theorem 2.2.4]{Kap93Chow} that this extends to an isomorphism of Chow quotients \[\Gr(d,n)\ChowQ T \cong (\PP^{d-1})^n\ChowQ\GL_d.\]  Thus, his Grassmannian Chow quotient can be viewed as compactifying the space of configurations of points in projective space, up to projectivity, or dually, the space of hyperplane arrangements.  This has been a fruitful perspective \cite{HKT,Alexeev-weighted} and it generalizes to our setting as follows:

\begin{theorem}\label{thm:genGM}
There is an isomorphism
\[\Gr(d,n)\ChowQ S \cong \left(\prod_{i=1}^m \PP\Hom(k^{r_i},k^d)\right)\ChowQ\GL_d\]
where $\GL_d$ acts diagonally by left matrix multiplication.
\end{theorem}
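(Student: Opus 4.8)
The plan is to mimic Kapranov's strategy for the maximal torus case, working one space at a time and then taking Chow closures. First I would recall the classical Gelfand--MacPherson setup: a $d \times n$ matrix (mod row operations, i.e.\ a point of $\Gr(d,n)$) can be read columnwise as $n$ points in $\PP^{d-1}$ (mod $\GL_d$), and the residual torus action on the Grassmannian side matches the residual torus action (scaling the columns) on the configuration side. In our situation the subtorus $S$ only rescales columns in blocks: the $r_i$ columns indexed by the $i$th block all get scaled by the same $t_i \in \Gm$. Grouping the columns of a $d\times n$ matrix into blocks of sizes $r_1,\dots,r_m$ exhibits it as a point of $\prod_{i=1}^m \Hom(k^{r_i},k^d)$, and scaling the $i$th block by $t_i$ is exactly the scaling action on the $i$th projective-space factor $\PP\Hom(k^{r_i},k^d)$. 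So on a suitable common $T$-invariant (equivalently $S$-invariant, and $\GL_d$-invariant) open locus $U$ where both group actions are free, one gets a canonical isomorphism of geometric quotients
\[
U/S \;\cong\; U'/\GL_d,
\qquad U' \subseteq \textstyle\prod_{i=1}^m \PP\Hom(k^{r_i},k^d),
\]
coming from the single space $U$ carrying commuting free actions of $S$ and of $\GL_d$ (after projectivizing appropriately); this is the ``diagram of three quotients'' underlying Gelfand--MacPherson. The point is that $U \subseteq \Mat_{d\times n}$ (or its relevant open subscheme) surjects both to $\Gr(d,n)$, with fibers the $\GL_d$-orbits, and to $\prod \PP\Hom(k^{r_i},k^d)$, with fibers the $S$-orbits, and the two residual actions are swapped.

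Next I would promote this birational identification of quotients to an isomorphism of Chow quotients, following Kapranov's Theorem 2.2.4. The key is that for a point of $U$, the closure of its $S$-orbit in $\Gr(d,n)$ and the closure of its $\GL_d$-orbit in $\prod \PP\Hom(k^{r_i},k^d)$ are related in a way that varies algebraically, so the two maps to Chow varieties — one sending $u \in U/S$ to $\overline{S\cdot u} \subseteq \Gr(d,n)$ as a cycle, the other sending the corresponding $u' \in U'/\GL_d$ to $\overline{\GL_d \cdot u'} \subseteq \prod \PP\Hom(k^{r_i},k^d)$ as a cycle — have the same (abstract) image variety, and taking Zariski closures in the respective Chow varieties yields an isomorphism $\Gr(d,n)\ChowQ S \cong (\prod \PP\Hom(k^{r_i},k^d))\ChowQ\GL_d$. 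Concretely I would either (a) quote the general principle that Chow quotients only depend on the common ``quotient stack'' data in such a diagram of groups acting on a common space with the stated orbit-closure compatibility — this is the mechanism Kapranov isolates — or (b) exhibit the isomorphism on the open dense loci $U/S \cong U'/\GL_d$ and check it extends to the boundary by a valuative/limit argument, identifying limiting cycles on one side with limiting cycles on the other via a one-parameter family in $U$.

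The main obstacle, as in Kapranov's original proof, is the boundary: showing the birational map between the Chow quotients is a biregular isomorphism rather than merely birational. One must verify that a one-parameter degeneration of $S$-orbit closures in $\Gr(d,n)$ corresponds, under the Gelfand--MacPherson dictionary, to a degeneration of $\GL_d$-orbit closures in $\prod\PP\Hom(k^{r_i},k^d)$, and that the limit cycles match on the nose (including multiplicities of components). I expect this to go through exactly as in \cite[Theorem 2.2.4]{Kap93Chow}: the common total space $U$ (the matrix space, with its two fibrations) lets one transport a limiting family across, since a family of matrices limiting in the Chow variety of the Grassmannian gives, columnwise-in-blocks, a family limiting in the Chow variety of the product of projective spaces, and conversely. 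The only genuinely new bookkeeping relative to Kapranov is that columns come in blocks and the torus $S$ is smaller, but this is precisely accommodated by replacing each $\PP^{d-1}$ factor with $\PP\Hom(k^{r_i},k^d)$ and each coordinate scaling with a block scaling; no reductivity is lost and $\GL_d$ is unchanged, so no non-reductive subtlety (of the kind flagged in the abstract for the Borel-transfer question) intervenes here.
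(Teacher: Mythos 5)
Your setup of the birational correspondence (commuting $S$- and $\GL_d$-actions on the space of $d\times n$ matrices, with columns grouped into blocks of sizes $r_1,\dots,r_m$) matches the paper's, and the paper agrees that this observation already gives a birational map between the two Chow quotients. But the heart of the theorem --- that this birational map is biregular --- is exactly the step you leave unproved. Your option (a), a ``general principle that Chow quotients only depend on the common quotient stack data,'' is not a theorem available at that level of generality, and your option (b) is a statement of what would need to be checked rather than an argument. The specific obstruction you never confront is that the two quotient maps out of the common matrix space, $\PP\Hom(k^d,k^n)\dashrightarrow \Gr(d,n)$ and $\PP\Hom(k^d,k^n)\dashrightarrow \prod_{i}\PP\Hom(k^d,k^{r_i})$, have different nonempty base loci (maps with nontrivial kernel on one side, matrices with an identically zero $r_i\times d$ block on the other). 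A one-parameter family of $S$-orbit closures in $\Gr(d,n)$ cannot simply be ``transported across'' the matrix space: its limit cycle, lifted upstairs, may meet the base locus of the other quotient map, and it is precisely there that components could be lost or multiplicities could change.

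The paper handles this by following Thaddeus \cite[\S 6.3]{Tha99} rather than Kapranov: it resolves both rational maps, noting that $\PP\Hom(k^d,\mathcal{S}_{d,n})\to\Gr(d,n)$ is a flat projective bundle (so the universal family of cycles over $\Gr(d,n)\ChowQ S$ pulls back), that the resolution $\PP\left(\bigoplus_{i}\mathcal{O}(e_i)\right)$ on the other side lets one push the resulting family forward to $\prod_{i}\PP\Hom(k^d,k^{r_i})$ after restricting away from the base locus, and that Koll\'ar's functorial Chow machinery \cite[Theorem 3.21]{Kollar-chow} converts the resulting family of cycles of expected dimension over the reduced base into a morphism of Chow quotients. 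Doing this symmetrically in both directions and invoking separatedness together with agreement on a dense open locus gives the isomorphism. If you prefer your valuative route (b), which is closer to Kapranov's original method, you would still need to carry out an explicit limit-cycle comparison in the block setting, verifying equality of limit cycles including multiplicities; as written, your proposal asserts this rather than proves it.
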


To prove this, we adapt an argument of Thaddeus in \cite{Tha99} and so also obtain a new proof of Kapranov's original result as a special case.  We can view the right side of the above isomorphism as compactifying the space of arrangements of ``parameterized''  linear subspaces: $(L_1,\alpha_1,\ldots,L_m,\alpha_m)$ where $L_i \subseteq \PP^{d-1}$ is a linear subspace of dimension $r_i - 1$ and $\alpha_i \in \Aut(L_i) \cong \PGL_{r_i}$.

Since orthogonal complement yields a $T$-equivariant isomorphism $\Gr(d,n) \cong \Gr(n-d,n)$ and hence an isomorphism of Chow quotients $\Gr(d,n)\ChowQ S \cong \Gr(n-d,n)\ChowQ S$ for any diagonal subtorus $S\subseteq T$, our generalized Gelfand-MacPherson isomorphism implies the following generalized Gale duality:

\begin{corollary}\label{cor:genGale}
There is a natural involutive isomorphism
\[\left(\prod_{i=1}^m \PP\Hom(k^{r_i},k^d)\right)\ChowQ\GL_d \cong \left(\prod_{i=1}^m \PP\Hom(k^{r_i},k^{(\sum_{i=1}^m r_i)-d})\right)\ChowQ\GL_{(\sum_{i=1}^m r_i)-d}\]
\end{corollary}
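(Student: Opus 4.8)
The plan is to deduce this formally from Theorem~\ref{thm:genGM} and the classical self-duality of the Grassmannian, using that $n=\sum_{i=1}^m r_i$. The key geometric input, already noted above, is the annihilator isomorphism $\Gr(d,n)\xrightarrow{\sim}\Gr(n-d,n)$, $V\mapsto V^{\perp}$, where we identify $(k^n)^{\ast}\cong k^n$ via the standard basis; on Pl\"ucker coordinates this is, up to sign, the Hodge star $p_J\mapsto p_{[n]\setminus J}$. This map is equivariant for the $T$-action once the target action is twisted by the inversion automorphism $\iota\colon t\mapsto t^{-1}$ of $T$, because $T$ acts on $(k^n)^{\ast}$ through the inverse characters. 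Since $\iota$ carries the diagonal subtorus $S$ to itself and preserves its embedding $S\hookrightarrow T$, and since a Chow quotient depends on the group action only through the family of orbit closures --- which is unaffected by precomposition with a group automorphism --- this descends to an isomorphism $\Gr(d,n)\ChowQ S\xrightarrow{\sim}\Gr(n-d,n)\ChowQ S$; it is involutive because $(V^{\perp})^{\perp}=V$ canonically.

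Next I would apply Theorem~\ref{thm:genGM} to $\Gr(d,n)$ and to $\Gr(n-d,n)$ and splice:
\begin{align*}
\left(\textstyle\prod_{i=1}^m \PP\Hom(k^{r_i},k^d)\right)\ChowQ\GL_d
&\cong \Gr(d,n)\ChowQ S \cong \Gr(n-d,n)\ChowQ S \\
&\cong \left(\textstyle\prod_{i=1}^m \PP\Hom(k^{r_i},k^{n-d})\right)\ChowQ\GL_{n-d}.
\end{align*}
Substituting $n=\sum_{i=1}^m r_i$ yields the stated isomorphism, and the middle map being an involution gives the ``involutive'' clause --- provided the two instances of Theorem~\ref{thm:genGM} are compatible with the annihilator map, i.e. the square they form together with the Grassmannian self-duality and the evident transpose/contragredient duality on configurations of parameterized linear subspaces commutes.

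I expect that last compatibility to be the only step requiring real care. One would trace through the construction underlying Theorem~\ref{thm:genGM} --- the adaptation of Thaddeus's argument --- and verify that it is natural enough that, under the generalized Gelfand-MacPherson correspondence, the annihilator involution on $\Gr(d,n)\ChowQ S$ corresponds to sending a parameterized arrangement $(L_1,\alpha_1,\ldots,L_m,\alpha_m)$ in $\PP^{d-1}$ to its Gale/annihilator dual arrangement in $\PP^{n-d-1}$. Alternatively, if one only wants an unnamed involutive isomorphism and is willing to forgo the naturality assertion, the first two paragraphs already suffice. Everything else is bookkeeping with $n=\sum r_i$ and the harmless replacement of characters by their inverses.
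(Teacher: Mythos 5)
Your proposal is correct and follows essentially the same route as the paper: the paper likewise observes that orthogonal complement gives a torus-equivariant isomorphism $\Gr(d,n)\cong\Gr(n-d,n)$, hence an isomorphism of Chow quotients $\Gr(d,n)\ChowQ S\cong\Gr(n-d,n)\ChowQ S$, and then applies Theorem~\ref{thm:genGM} to both sides. Your extra care about the inversion twist on the dual torus action and about compatibility of the two Gelfand--MacPherson instances is a welcome refinement of details the paper leaves implicit, but it does not change the argument.
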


In geometric terms, arrangements (up to projectivity) of $m$ generic parameterized linear subspaces $L_i \hookrightarrow \PP^{d-1}$ and their Chow limits are in natural bijection with arrangements (up to projectivity) of $m$ generic parameterized linear subspaces, of the same dimensions, in $\PP^{m -d - 1 + \sum\dim(L_i)}$ and their Chow limits.

Kapranov showed \cite[Theorem 4.1.8]{Kap93Chow} that his Chow quotients generalize the ubiquitous Grothendieck-Knudsen moduli spaces of stable pointed rational curves, namely 
\[Gr(2,n)\ChowQ \GL_2 \cong \M_{0,n}.\] Another generalization was constructed by Chen-Gibney-Krashen in \cite{CGK05}, where a moduli space denoted $T_{d,n}$ compactifying the space of $n$ distinct points and a disjoint parameterized hyperplane in $\PP^d$ up to projectivity was introduced and studied and shown to satisfy $T_{1,n} \cong \M_{0,n+1}$.  Essentially $T_{d,n}$ is the locus in the Fulton-MacPherson configuration space $X[n]$ \cite{Fulton-MacPherson} where all $n$ points have come together at a single fixed smooth point on a $d$-dimensional variety $X$ \cite[\S3.1]{CGK05}.  The space $T_{d,n}$ is birational to $\Gr(d+1,n+d)\ChowQ S$, where \[S = \{(\underbrace{t_1,\ldots,t_1}_{d},t_2,\ldots,t_{n+1})~|~t_i\in k^\times\},\] since both compactify the space of $n$ distinct points and a disjoint parameterized hyperplane $\PP^{d-1} \hookrightarrow \PP^d$ up to projectivity.  Krashen has asked, informally, whether this birational map is actually an isomorphism.  While we have not been able to answer this question, we conclude this paper by showing that Krashen's question is a specific instance of a much more general question that appears not to have been asked previously in the literature---namely, whether the classical Borel transfer principle (relating non-reductive invariants to reductive invariants) extends from GIT quotients \cite{Doran-Kirwan} to Chow quotients.  

\begin{ack}
We thank Gary Gordon and Felipe Rincon for drawing our attention to discrete polymatroids, and we thank Valery Alexeev, Danny Krashen, and Angela Gibney for helpful conversations on this project.  This paper is part of the second author's PhD dissertation at the University of Georgia, supervised by the first author.  The first author was supported in part by NSF grant DMS-1802263, NSA grant H98230-16-1-0015, and Simons Collaboration Grant 346304.
\end{ack}

\section{Discrete polymatroids}

For a non-negative integer vector $v = (v_1,\ldots,v_m) \in \mathbb{Z}_{\ge 0}^m$, the \emph{modulus} is $|v| = \sum v_i$.  A \emph{discrete polymatroid} on the ground set $[m] = \{1,2,\ldots,m\}$ can be defined as a nonempty finite subset $B \subseteq \mathbb{Z}_{\ge 0}^m$ of vectors all of the same modulus (called the \emph{rank} of $B$) satisfying the following exchange property: if $u,v\in B$ with $u_i > v_i$ for some $1 \le i \le m$, then there exists $1 \le j \le m$ such that $u_j < v_j$ and $u - e_i + e_j \in B$ \cite[Theorem 2.3]{HH02}.  This can be reformulated in terms of multisets as follows.  Given a discrete polymatroid $B$, let \[[\vec{r}] := \{\underbrace{1,\ldots,1}_{r_1},\underbrace{2,\ldots,2}_{r_2},\ldots,\underbrace{m,\ldots,m}_{r_m}\}\] be the multiset where $i$ has multiplicity $r_i := \max_{v\in B}\{v_i\}$.  Each element of $B$ can then be viewed as a sub-multiset of $[\vec{r}]$.  If one considers the usual basis definition of a matroid except replacing the word ``set'' with ``multiset'' then the discrete polymatroid $B$ is a matroid on the multiset $[\vec{r}]$, and conversely any matroid on a multiset is a discrete polymatroid on the ground set given by the set underlying the multiset.  We will freely switch between the multiset perspective and the integer vector perspective of discrete polymatroids.

\begin{proof}[Proof of Theorem \ref{thm:matroid}]
This can either be proven by adapting the original arguments in \cite{GGMS87}, or it can be reduced to the results in \cite{GGMS87} by using a multiset projection map; we present here the latter approach.  

Fix an integer $d\ge 1$ and a multiset $[\vec{r}]$ with underlying set $[m] = \{1,2,\ldots,m\}$ where $i$ has multiplicity $r_i \ge 1$.  Let $\pi_{\vec{r}} : [n] \rightarrow [m]$ be the ``projection'' map sending $1,2,\ldots,r_1$ to 1, and $r_1+1,\ldots,r_1+r_2$ to 2, etc.  By a slight abuse of notation, for a subset $A = \{a_1,\ldots,a_\ell\}\subseteq [n]$ we denote by $\pi_{\vec{r}}(A)$ the multiset $\{\pi_{\vec{r}}(a_1),\ldots,\pi_{\vec{r}}(a_\ell)\}$, in other words the multiplicity of $j$ is the cardinality of the fiber $\pi_{\vec{r}}^{-1}(j) \cap A$.  Clearly $\pi_{\vec{r}}$ then sends a rank $d$ matroid on $[n]$ to a rank $d$ discrete polymatroid on $[m]$, and conversely if $B$ is a rank $d$ discrete polymatroid on $[m]$ then $\{A \subseteq [n]~|~\pi_{\vec{r}}(A) \in B\}$ is a rank $d$ matroid on $[n]$; we denote the latter matroid by $\pi_{\vec{r}}^{-1}(B)$.

Given a rank $d$ discrete polymatroid $B$ on the multiset $[\vec{r}]$, the rank $d$ matroid $\pi_{\vec{r}}^{-1}(B)$ on $[n]$ has basis polytope $P$ given by the convex hull of the vectors $e_A := \sum_{i\in A} e_i$ for $A \in \pi_{\vec{r}}^{-1}(B)$, and by the classical results of \cite{GGMS87} the vertices and edges of $P$ are among the vertices and edges of the hypersimplex $\Delta(d,n)$.  It then follows trivially that the linear projection $\lambda_{\vec{r}}(P)$ has its vertices and edges among the images under $\lambda_{\vec{r}}$ of the vertices and edges of $\Delta(d,n)$.  Moreover, $\lambda_{\vec{r}}(P) \subseteq \mathbb{R}^m$ is the convex hull of the basis vectors of $B$ (where now we view $B$ as a set of vectors in $\mathbb{Z}^m_{\ge 0}$), and by \cite[Theorem 3.4]{HH02} we can recover $B$ from this convex hull (specifically, the integral vectors in this convex hull are the independent sets in $B$).  This faithfully embeds the set of rank $d$ discrete polymatroids on $[\vec{r}]$ into the set of subpolytopes of $\lambda_{\vec{r}}(\Delta(d,n))$ whose vertices and edges are among the images under $\lambda_{\vec{r}}$ of those of $\Delta(d,n)$. This association is also surjective, since if $Q \subseteq \lambda_{\vec{r}}(\Delta(d,n))$ is a subpolytope whose vertices and edges are among the images of those of $\Delta(d,n)$, then the preimage of $Q$ under $\lambda_{\vec{r}}$ is a subpolytope of $\Delta(d,n)$ whose vertices and edges are among those of this hypersimplex, i.e., $\lambda_{\vec{r}}^{-1}(Q)$ is a matroid polytope, and the multiset image under $\pi_{\vec{r}}$ of the corresponding rank $d$ matroid on $[n]$ is a rank $d$ discrete polymatroid with $Q$ as its associated polytope.

We now turn to the assertion about representability.  Given a $k$-point of the Grassmannian $L\in \Gr(d,n)(k)$, the lattice polytope $\Delta_{\overline{S\cdot L}}$ for the projective toric variety $\overline{S\cdot L} \subseteq \PP^{\binom{n}{d}-1}$ is the image of this torus orbit closure under the moment map $\mu_S : \PP^{\binom{n}{d}-1} \rightarrow \mathbb{R}^m$ for $S$.  This moment map is the composition of the moment map $\mu_T : \PP^{\binom{n}{d}-1} \rightarrow \mathbb{R}^n$ for the maximal torus $T$ with the linear projection $\lambda_{\vec{r}} : \mathbb{R}^n \rightarrow \mathbb{R}^m$.  Thus, 
\[\Delta_{\overline{S\cdot L}} = \mu_S(\overline{S\cdot L}) = \lambda_{\vec{r}}\left(\mu_T(\overline{S\cdot L})\right) = \lambda_{\vec{r}}\left(\mu_T(\overline{T\cdot L})\right) = \lambda_{\vec{r}}(\Delta_{\overline{T\cdot L}}),\]
which is the polytope associated to the discrete polymatroid $\pi_{\vec{r}}(M(L))$, where $M(L)$ is the matroid represented by  $L$.  But $\pi_{\vec{r}}(M(L))$ is also the discrete polymatroid represented by $L$.
\end{proof}

The linear projection $\lambda_{\vec{r}} : \mathbb{R}^n \rightarrow \mathbb{R}^m$ may send vertices of the hypersimplex $\Delta(d,n)$ to non-vertex points of the polytope $\lambda_{\vec{r}}(\Delta(d,n))$, and for the above theorem it is crucial that our subpolytopes are allowed to use such points rather than just the actual vertices of $\lambda_{\vec{r}}(\Delta(d,n))$, as the following example illustrates:

\begin{example}
Let $\vec{r} = (1,2,2)$, so $n=5$ and $m=3$; the projection function $\pi_{\vec{r}}$ is $1 \mapsto 1$, and $2,3 \mapsto 2$, and $4,5 \mapsto 3$; in coordinates, the linear projection $\lambda_{\vec{r}} : \mathbb{R}^5 \rightarrow \mathbb{R}^3$ is $(x_1, x_2+x_3, x_4+x_5)$.  Consider rank 3 matroids.  The hypersimplex $\Delta(3,5)$ has 10 vertices, the permutations of the vector $(1,1,1,0,0)$; the images of these 10 vertices are $(1,1,1)$ four times, $(1,2,0)$ once, $(1,0,2)$ once, $(0,1,2)$ twice, and $(0,2,1)$ twice.  The polytope $\lambda_{\vec{r}}(\Delta(3,5))$ is a trapezoid, and the point $(1,1,1)$ is not a vertex of this trapezoid even though it is the image of vertices of the hypersimplex (see Figure \ref{fig:12233}).  The segment from, say, $(1,1,1)$ to $(1,2,0)$ is a discrete polymatroid even though it has a vertex that is not a vertex of the trapezoid.  On the other hand, the four vertices of the trapezoid $(1,2,0), (1,0,2), (0,1,2), (0,2,1)$ do not form a discrete polymatroid because the trapezoid edge from $(1,2,0)$ to $(1,0,2)$ is not an edge of the projected hypersimplex, it is a union of two such edges (and indeed the basis exchange axiom fails on these two without the presence of the midpoint $(1,1,1)$).
\end{example}

\begin{figure}
\centering
\begin{tikzpicture}
	\draw[->] (xyz cs:x=0) -- (xyz cs:x=4) node[below] {$y$};
	\draw[->] (xyz cs:y=0) -- (xyz cs:y=4) node[right] {$z$};
	\draw[->] (xyz cs:z=0) -- (xyz cs:z=6) node[left] {$x$};
	\draw[dashed,blue] (3,0) -- (0,3);
	\draw[dashed,blue] (0,3) -- (-1.5,-1.5);
	\draw[dashed,blue] (3,0) -- (-1.5,-1.5);
	\draw[] (-0.5,1.5) node[left] {(1,0,2)} -- (1,2) node[above right] {(0,1,2)} -- (2,1) node[above right] {(0,2,1)}-- (1.5,-0.5) node[below right]{(1,2,0)} -- (-0.5,1.5) -- (0.5,0.5) node[left] {(1,1,1)};
	\foreach \Point in
	{(-0.5,1.5) , (1,2) , (2,1) , (1.5,-0.5) , (-0.5,1.5), (0.5,0.5)}
	{\node at \Point {\textbullet};}
\end{tikzpicture}
\caption{For the multiset $\{1,2,2,3,3\}$ the projected polytope $\lambda_{\vec{r}}(\Delta(3,5))$ is a trapezoid lying on a triangle.  The point $(1,1,1)$ is not a vertex of the trapezoid even though it is the image under the linear projection $\lambda_{\vec{r}} : \mathbb{R}^5 \rightarrow \mathbb{R}^3$ of a vertex (in fact, four of them) of the hypersimplex $\Delta(3,5)$.}
\label{fig:12233}
\end{figure}
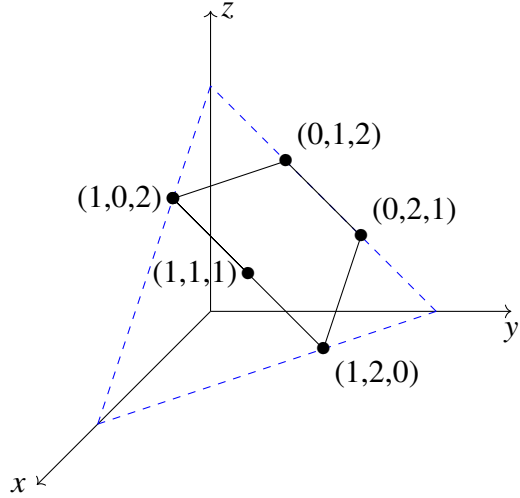

The interior of the Chow quotient $\Gr(d,n)\ChowQ S$ consists, by definition, of torus orbit closures $\overline{S\cdot L}$ (viewed as algebraic cycles) for generic linear subspaces $L\in Gr(d,n)$; taking the closure of this interior locus in the Chow quotient adds limit points that are certain algebraic cycles \[\sum_{i=1}^\ell m_iZ_i \in \Chow\left(\Gr(d,n)\right),\] about which, following Kapranov, we can now say a bit more (cf. \cite[Proposition 1.2.11]{Kap93Chow}):

\begin{proposition}\label{prop:cycles}
For each cycle $\sum_{i=1}^\ell m_i Z_i \in \Gr(d,n)\ChowQ S$, the multiplicities $m_i$ are all 1 and the irreducible cycles $Z_i$ are single orbit closures $\overline{S\cdot L_i}$, $L_i\in \Gr(d,n)$.  The lattice polytopes $\Delta_{\overline{S\cdot L_i}}$, for $i=1,\ldots,\ell$, form a polyhedral decomposition of $\lambda_{\vec{r}}(\Delta(d,n))$.
\end{proposition}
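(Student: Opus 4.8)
The plan is to deduce this from the corresponding---and classical---statement for \emph{toric} Chow quotients, transported along the natural closed embedding $\Gr(d,n)\ChowQ S \hookrightarrow \PP^{\binom{n}{d}-1}\ChowQ S$ (induced by push-forward of cycles along $\Gr(d,n)\hookrightarrow\PP^{\binom{n}{d}-1}$, and realized concretely in the examples section). The first step is to pin down the relevant toric data: the scalar $\Gm\subseteq S$ acts trivially on $\PP^{\binom{n}{d}-1}$, so the $S$-action factors through an $(m-1)$-dimensional quotient torus, and---exactly as in the computation in the proof of Theorem~\ref{thm:matroid}, where $\mu_S=\lambda_{\vec r}\circ\mu_T$---the associated moment polytope is $\mu_S(\PP^{\binom{n}{d}-1})=\lambda_{\vec r}(\Delta(d,n))$. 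Equivalently, $\PP^{\binom{n}{d}-1}$ is the projective toric variety attached to the multiset of lattice points $\mathcal A=\{\lambda_{\vec r}(e_B)\mid B\in\binom{[n]}{d}\}\subseteq\ZZ^m$, whose convex hull is $\lambda_{\vec r}(\Delta(d,n))$.

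The second step is to invoke the structure of $\PP^{\binom{n}{d}-1}\ChowQ S$---the toric analogue of \cite[Proposition 1.2.11]{Kap93Chow}, established in the theory of quotients of toric varieties by Kapranov--Sturmfels--Zelevinsky: every point of $\PP^{\binom{n}{d}-1}\ChowQ S$ is an algebraic cycle $\sum_\sigma Z_\sigma$, where $\sigma$ ranges over the maximal cells of a polyhedral subdivision of $\lambda_{\vec r}(\Delta(d,n))$ into lattice sub-polytopes, every multiplicity equals $1$, and each $Z_\sigma$ is the closure of a single $S$-orbit in $\PP^{\binom{n}{d}-1}$ whose moment polytope is $\sigma$. (One first observes that all orbit closures under consideration, and hence all their Chow limits, lie in the linear span of the distinct characters occurring in $\mathcal A$, which reduces one to a genuine repetition-free point configuration to which this theory applies verbatim; these subdivisions are moreover the coherent ones, though the proposition as stated does not need this.)

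The proposition then follows by unwinding the two descriptions. Given a cycle $\sum_{i=1}^\ell m_i Z_i \in \Gr(d,n)\ChowQ S$, regard it via the closed embedding as a point of $\PP^{\binom{n}{d}-1}\ChowQ S$; it must be of the form $\sum_\sigma Z_\sigma$ above, so comparing irreducible components gives $m_i=1$ for all $i$ and $Z_i=\overline{S\cdot p_i}$ for some $p_i\in\PP^{\binom{n}{d}-1}$. Since $Z_i$ is supported on $\Gr(d,n)$ we get $p_i=L_i\in\Gr(d,n)$, hence $Z_i=\overline{S\cdot L_i}$; and by Theorem~\ref{thm:matroid} the lattice polytope $\Delta_{\overline{S\cdot L_i}}$ is precisely the cell attached to $Z_\sigma=Z_i$. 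Thus the polytopes $\Delta_{\overline{S\cdot L_i}}$ are exactly the maximal cells of a polyhedral decomposition of $\lambda_{\vec r}(\Delta(d,n))$.

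The crux is the toric input of the second step---that the Chow-theoretic limit of the generic $S$-orbit closures is reduced with toric irreducible components, i.e., assertions (1) and (2) of the proposition. Granting this, assertion (3) is essentially bookkeeping, and can in any case be recovered from the facts that the degree of a projective toric variety is the normalized volume of its moment polytope, that degree is constant in the universal family, and that the $\Delta_{\overline{S\cdot L_i}}$ cover $\lambda_{\vec r}(\Delta(d,n))$ (so $\sum_i\vol(\Delta_{\overline{S\cdot L_i}})=\vol(\lambda_{\vec r}(\Delta(d,n)))$, forcing the cells to have disjoint interiors). If one prefers to bypass the toric structure theorem, the alternative is a direct valuative argument in the spirit of this paper's other proofs: base-change the universal family over a discrete valuation ring so that its generic fiber is a single toric orbit closure with polytope $\lambda_{\vec r}(\Delta(d,n))$, then analyze the $S$-equivariant degeneration of this polarized toric variety, whose special fiber is governed by the regular subdivision of $\lambda_{\vec r}(\Delta(d,n))$ induced by the degeneration---this is where the substantive work lies.
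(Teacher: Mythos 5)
Your overall strategy---push everything into the toric Chow quotient $\PP^{\binom{n}{d}-1}\ChowQ S$ and quote the Kapranov--Sturmfels--Zelevinsky structure theory---is the same route the paper takes for the polyhedral-decomposition assertion, and your handling of the ``single orbit closure'' and ``decomposition'' claims is essentially sound (the paper gets the first from Kapranov's Theorem~0.3.1, whose hypothesis is automatic for torus actions, and the second from \cite[Proposition 3.6]{KSZ91} via exactly the closed embedding you describe). The genuine gap is your second step's assertion that in the toric Chow quotient \emph{every} multiplicity equals $1$. That is false for general point configurations: if $Z_0=\sum_\sigma m_\sigma Z_\sigma$ is a limit of generic orbit closures with polytope $Q=\Conv(\mathcal{A})$, then $\deg Z_\sigma$ is the volume of $\sigma$ normalized with respect to the (possibly proper) sublattice generated by $\mathcal{A}\cap\sigma$, and constancy of degree in the Chow variety forces $m_\sigma=[\ZZ\mathcal{A}:\ZZ(\mathcal{A}\cap\sigma)]$, a lattice index that can exceed $1$ (already for $\mathcal{A}=\{0,1,3\}\subseteq\ZZ$ and the subdivision $[0,1]\cup[1,3]$ one gets a component of multiplicity $2$). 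Passing to the repetition-free configuration underlying $\mathcal{A}$ does nothing to repair this.

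So multiplicity $1$ is not bookkeeping; it is the one place where the matroidal structure must enter. The paper's proof supplies exactly the missing input: Kapranov's \cite[Proposition 1.2.15]{Kap93Chow} shows that the vertices of a (representable) matroid subpolytope of $\Delta(d,n)$ generate the same lattice as the vertices of $\Delta(d,n)$ itself, i.e., the relevant index is $1$; since a surjective linear map of lattices preserves equality of sublattices, the index remains $1$ after applying $\lambda_{\vec{r}}$, and since $\Delta_{\overline{S\cdot L_i}}=\lambda_{\vec{r}}(\Delta_{\overline{T\cdot L_i}})$ this gives $m_i=1$. Your fallback valuative/degeneration argument would run into the same issue---the special fiber of such a toric degeneration carries precisely these lattice-index multiplicities---so it cannot bypass the matroid-polytope lattice computation either; it is sketched rather than carried out, and as sketched it would need this same input to close.
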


\begin{proof}
The condition in Kapranov's \cite[Theorem 0.3.1]{Kap93Chow} is automatically satisfied here so each $Z_i$ is a single orbit closure $\overline{S\cdot L_i}$.  Kapranov's proof of \cite[Proposition 1.2.15]{Kap93Chow} shows that the index of the sub-lattice generated by the vertices of the representable matroid polytope $\Delta_{\overline{T\cdot L_i}}$ inside the lattice generated by the vertices of the hypersimplex $\Delta(d,n)$ is one.  This index is preserved when applying the linear map $\lambda_{\vec{r}}$, and as we noted at the end of the proof of Theorem \ref{thm:matroid} we have $\Delta_{\overline{S\cdot L_i}} = \lambda_{\vec{r}}(\Delta_{\overline{T\cdot L_i}})$, so we have that the multiplicity $m_i$ of our cycle $\overline{S\cdot L_i}$ is also one.  The assertion about polyhedral decompositions follows the more general result \cite[Proposition 3.6]{KSZ91}, since the torus equivariant Pl\"ucker embedding identifies each point of the Chow quotient $\Gr(d,n)\ChowQ S$ with a point of the toric Chow quotient $\PP^{\binom{n}{d}-1}\ChowQ S$.
\end{proof}

\section{Examples of subtorus Chow quotients}\label{sec:examples}

In this section we describe some diagonal subtorus Chow quotients of $\Gr(2,4)$, starting with the case of the maximal torus that Kapranov worked out in \cite[Example 1.2.12]{Kap93Chow} so that we can present an explicit equational approach that generalizes to the other cases.  First, let us recall the more general setup.  The Pl\"ucker embedding $\Gr(d,n) \subseteq \PP^{\binom{n}{d}-1}$ is maximal torus equivariant so induces a closed embedding of Chow quotients \[\Gr(d,n)\ChowQ S \subseteq \PP^{\binom{n}{d}-1}\ChowQ S\] for any diagonal subtorus $S \subseteq T$.   Since $S$ acts here through the dense torus for $\PP^{\binom{n}{d}-1}$, the Chow quotient $\PP^{\binom{n}{d}-1}\ChowQ S$ is a projective toric variety; the lattice polytope for it is a secondary polytope that we now describe (see \cite{KSZ91} and \cite[\S0.2]{Kap93Chow}).  

If we denote the coordinates on $\PP^{\binom{n}{d}-1}$ by $x_I, I\in \binom{[n]}{d}$ then $t = (t_1,\ldots,t_n) \in T$ acts by $t\cdot x_I = \left(\prod_{i\in I} t_i \right) x_I$.  These weights are encoded by the $\binom{n}{d}$ integer vectors $\sum_{i\in I} e_i \in \mathbb{Z}^n$.  The weights for the rank $m$ diagonal subtorus $S \subseteq T$ are then the images of these integer vectors under the linear map $\lambda_{\vec{r}} : \mathbb{R}^n \rightarrow \mathbb{R}^m$.  Define the following cardinality $\binom{n}{d}$ multiset: \[A := \{\lambda_{\vec{r}}\left(\sum_{i\in I} e_i\right)~|~I \in \binom{[n]}{d}\}.\]  The lattice polytope for $\PP^{\binom{n}{d}-1}\ChowQ S$ is the secondary polytope $\Sigma(A)$.  Recall that this means $\Sigma(A)$ is the convex hull in $\mathbb{R}^A$ of the characteristic functions $\varphi_{\mathcal{T}} : A \rightarrow \mathbb{Z}$ where $\mathcal{T}$ is a triangulation of the pair $(\Conv(A),A)$---meaning a collection of simplices, intersecting only along common faces, whose union is $\Conv(A)$ and whose vertices lie in $A$---and where by definition the value of $\varphi_{\mathcal{T}}$ on $a\in A$ is the sum of the volumes of all simplices in $\mathcal{T}$ for which $a$ is a vertex (with the volume form normalized by setting the volume of the smallest possible lattice simplex to be 1).

\subsection{$\Gr(2,4)$ with the maximal torus action}
Here $\vec{r} = (1,1,1,1)$ and $\lambda_{\vec{r}}$ is the identity on $\mathbb{R}^4$, so $A$ consists of the six vertices of the octahedron $\Delta(2,4)$, namely all permutations of the vector $(1,1,0,0)$.  There are three triangulations here: choose two of the three pairs of non-adjacent vertices and for each of these chosen pairs slice a plane through the remaining four vertices.  The three characteristic functions are then the vectors $(4,4,2,2,2,2)$, $(2,2,4,4,2,2)$, and $(2,2,2,2,4,4)$.  These form an equilateral triangle whose lattice points, in addition to the three vertices, are the midpoints of the three edges, namely $(3,3,3,3,2,2)$, $(3,3,2,2,3,3)$, and $(2,2,3,3,3,3)$.  This lattice polytope defines the toric variety $\mathbb{P}^2$ polarized by the line bundle $\mathcal{O}(2)$; by labeling the lattice points, in the order listed above, we can view this as $\Proj k[x^2,y^2,z^2,xy,xz,yz]$.  

The Grassmannian $\Gr(2,4)$ is a hypersurface in $\PP^5$, defined by a single Pl\"ucker relation, so the Chow quotient $\Gr(2,4)\ChowQ S \subseteq \PP^5\ChowQ S \cong \PP^2$ is also a hypersurface and our next task is finding the equation for it.  If we write the coordinates for $\PP^5$ as $(x_{12},x_{34},x_{13},x_{24},x_{14},x_{23})$ then the monomials specified by the six lattice points described in the preceding paragraph, after dividing by the common factor that is the product of the squares of all the variables, are the following:
\[m_1 = x_{12}^2x_{34}^2,~m_2 = x_{13}^2x_{24}^2,~m_3 = x_{14}^2x_{23}^2,~m_4 = x_{12}x_{34}x_{13}x_{24},~m_5 = x_{12}x_{34}x_{14}x_{23},~m_6 = x_{13}x_{24}x_{14}x_{23}.\]  Multiplying the Pl\"ucker relation
\[x_{12}x_{34} - x_{13}x_{24} + x_{14}x_{23} = 0\]
by $x_{12}x_{34}$ yields the relation $m_1 - m_4 + m_5$, and similarly multiplying by $x_{13}x_{24}$ yields $m_4 - m_2 + m_6 = 0$ and multiplying by $x_{14}x_{23}$ yields $m_5 - m_6 + m_3 = 0$.  These are linear relations among the monomials $m_i$, so they are three quadratic relations among the variables $x,y,z$ introduced at the end of the preceding paragraph, namely 
\[x^2 - xy + xz = 0,~ xy - y^2 + yz = 0,\text{ and } xz - yz + z^2 = 0.\]
These quadratics generate a non-saturated ideal whose saturation is the principal ideal generated by $x - y + z = 0$; this linear relation is the defining equation for the Chow quotient $\Gr(2,4)\ChowQ T \subseteq \PP^2$ that we were seeking.  Note that in \cite[Example 1.2.12]{Kap93Chow} Kapranov described this as a conic in the plane, whereas we see here more specifically it is a line in the plane together embedded by $\mathcal{O}(2)$ as a conic in the Veronese surface in $\PP^5$.

\subsection{$\Gr(2,4)$ with a rank 3 diagonal subtorus}

Now consider the rank 3 diagonal subtorus $S\subseteq T$ defined by $\vec{r} = (1,1,2)$, namely $S = \{(t_1,t_2,t_3,t_3)~|~t_i\in k^\times\}$, which acts on a subspace $L \in \Gr(2,4)$ represented by a $2\times 4$ matrix by rescaling the first two columns independently and rescaling the last two columns together.  Here $\Gr(2,4)\ChowQ S$ is a surface embedded in the toric threefold $\PP^5\ChowQ S$.  The linear map $\lambda_{\vec{r}} : \mathbb{R}^4 \rightarrow \mathbb{R}^3$ is $(x_1,x_2,x_3+x_4)$.  The multiset $A$, the image under this linear projection of the 6 vertices of the octahedron $\Delta(2,4)$, is $(1,1,0)$, $(1,0,1)$ with multiplicity 2, $(0,1,1)$ with multiplicity 2, and $(0,0,2)$.  This is a square with a pair of non-adjacent vertices doubled, and there are eight triangulations: there are two ways of subdividing with a diagonal line segment, and for each of these there are four ways of choosing which of the doubled vertices to use in the resulting pair of triangles.  The six characteristic functions, which we shall name $v_1,\ldots,v_4,w_1,\ldots,w_4$, then take the following form: 
\begin{eqnarray*}
v_1 = (1,2,0,2,0,1),~v_2 = (1,2,0,0,2,1),~v_3 = (1,0,2,0,2,1),~v_4 = (1,0,2,2,0,1),\\
w_1 = (2,1,0,1,0,2),~w_2 = (2,1,0,0,1,2),~w_3 = (2,0,1,0,1,2),~w_4 = (2,0,1,1,0,2).
\end{eqnarray*}
The convex hull of these is a 3-dimensional polytope.  The convex hull of the $v_i$ is a square and the convex hull of the $w_i$ is a smaller square that is parallel to it, so altogether we have a truncated square pyramid.  A square is the toric polytope description of $\PP^1\times\PP^1$, extending this to a square pyramid corresponds to taking the projective cone over $\PP^1\times\PP^1$, and truncating this pyramid corresponds to blowing up the torus-fixed cone point corresponding to the pyramid apex.  In coordinates this can be written \[\PP^5\ChowQ S \cong \Bl\left(\Proj k[x_0,x_1,x_2,x_3,y]/(x_0x_3-x_1x_2)\right),\] and by computing lattice lengths one sees that the polarization is $\mathcal{O}(2H-E)$.  To find the equations for the closed subvariety $\Gr(2,4)\ChowQ S$ inside here, we follow the approach in the previous example.  Plugging the variables $x_{ij}$ into the 8 vertices of our secondary polytope yields the following monomials:
\begin{eqnarray*}
m_1 = x_{12}x_{13}^2x_{23}^2x_{34},~m_2 = x_{12}x_{13}^2x_{24}^2x_{34},~m_3 = x_{12}x_{14}^2x_{24}^2x_{34},~m_4 = x_{12}x_{14}^2x_{23}^2x_{34},\\
n_1 = x_{12}^2x_{13}x_{23}x_{34}^2,~n_2 = x_{12}^2x_{13}x_{24}x_{34}^2,~n_3 = x_{12}^2x_{14}x_{24}x_{34}^2,~n_4 = x_{12}^2x_{14}x_{23}x_{34}^2.
\end{eqnarray*}
Multiplying the Pl\"ucker relation by $x_{12}x_{13}x_{24}x_{34}$ and by $x_{12}x_{14}x_{23}x_{34}$ yields the relations
\[n_2 - m_2 + \prod x_{ij} = 0 \text{ and }n_4 - \prod x_{ij} + m_4 = 0\] so our Grassmannian Chow quotient here is defined in the above toric Chow quotient by the single relation $m_2 - m_4 - n_2 - n_4 = 0$ in the polynomial Cox ring.

\subsection{$\Gr(2,4)$ with a balanced rank two diagonal subtorus}

Next, consider the diagonal subtorus $\{(t_1,t_1,t_2,t_2)~|~t_i\in k^\times\}$ defined by $\vec{r}=(2,2)$.  The linear projection $\lambda_{\vec{r}} : \mathbb{R}^4 \rightarrow \mathbb{R}^2$ is $(x_1+x_2,x_3+x_4)$ which sends the vertices of $\Delta(2,4)$ to $(2,0)$, $(1,1)$ four times, and $(0,2)$.   The result of course is an interval with a single interior lattice point that has been quadrupled.  There are five triangulation, four from subdividing with the different midpoints and one from not subdividing at all; the characteristic functions are:
\[v_1 = (1,2,0,0,0,1),v_2=(1,0,2,0,0,1),v_3=(1,0,0,2,0,1),v_4=(1,0,0,0,2,1),v_5=(2,0,0,0,0,2).\]
The convex hull of $v_1,\ldots,v_4$ is a tetrahedron giving the polarized toric variety $(\PP^3,\mathcal{O}(2))$, and $\\P^5\ChowQ S$ is the toric variety given by the convex cone over this tetrahedron with apex $v_5$.  Plugging the variables $x_{ij}$ into these five vertices yields
\[m_1 = x_{12}x_{13}^2x_{34}, ~m_2 = x_{12}x_{14}^2x_{34}, ~m_3 = x_{12}x_{23}^2x_{34}, ~m_4 = x_{12}x_{24}^2x_{34}, ~m_5 = x_{12}^2x_{34}^2.\]
The Pl\"ucker relation can be expressed as
\[\sqrt{m_5} - \sqrt{\frac{m_1m_4}{m_5}} + \sqrt{\frac{m_2m_3}{m_5}} = 0,\]
which after some elementary algebra yields the relation
\[m_1^2m_4^2 + m_2^2m_3^2 + m_5^2 - 2m_1m_2m_3m_4 - 2m_1m_4m_5^2 - 2m_2m_3m_5^2 = 0\]
defining $\Gr(2,4)\ChowQ S$ in the Cox ring of our toric variety $\PP^5\ChowQ S$.

\section{Maps between Chow quotients}

Let us start here by generalizing Kapranov's \cite[Theorem 1.6.6]{Kap93Chow}; while one probably could have adapted Kapranov's proof nearly verbatim to our setting, we instead provide a slight variant that we feel brings out more prominently the elegant toric geometry underlying the result.

\begin{proof}[Proof of Theorem \ref{thm:hypersimplicial}]
Recall from the theorem statement that we have fixed an index $i$ and denoted by $I$ the index of the $r_i$ columns acted upon nontrivially by the $i^{\text{th}}$ $\Gm$ factor of $S$ and by $S_i$ the projection of $S$ onto the coordinates outside of $I$.  So $S$ has rank $m$ and $S_i$ has rank $m-1$.  Let 
\[a_i : \Gr(d,n)\ChowQ S \dashrightarrow \Gr(d-r_i,n-r_i)\ChowQ S_i\] 
be the rational map sending a generic torus orbit closure $\overline{S\cdot L}$, $L\in \Gr(d,n)$, to the torus orbit closure $\overline{S_i\cdot (L\cap H_I)}$, where $H_I \subseteq k^n$ is the coordinate linear subspace defined by setting all coordinates in $I$ equal to zero (and $\Gr(d-r_i,n-r_i)$ here parameterizes subspaces of $H_I \cong k^{n-{r-i}}$).  Let
\[b_i : \Gr(d,n)\ChowQ S \dashrightarrow \Gr(d,n-r_i)\ChowQ S_i\] 
be the rational map sending a generic $\overline{S\cdot L}$ to $\overline{S_i\cdot \pi_{I^c}(L)}$, where $\pi_{I^c} : k^n \rightarrow k^{n-r_i}$ projects away the $I$-coordinates.

To show that these rational maps extend to morphisms, we will use the valuative criterion provided in \cite[Theorem 7.3]{GG14Kap} (here for convenience we will use the analytic language of 1-parameter families, rather valuation rings, since we have restricted to the setting $k=\mathbb{C}$ anyway).  This means we need to show that for any 1-parameter family of cycles $Z_t$, $t\in k^\times$, in the interior of $\Gr(d,n)\ChowQ S$, which necessarily maps to a 1-parameter family of cycles $a_i(Z_t)$ in the interior of $\Gr(d-r_i,n-r_i)\ChowQ S_i$, the limit cycle \[\lim_{t \rightarrow 0}a_i(Z_t) \in \Gr(d-r_i,n-r_i)\ChowQ S_i \subseteq \Chow\left(\Gr(d-r_i,n-r_i)\right) \subseteq \Chow\left(\PP^{\binom{n-r_i}{d-r_i}-1}\right)\] depends only on the limit cycle
\[Z_ 0 := \lim_{t \rightarrow 0}Z_t \in \Gr(d,n)\ChowQ S \subseteq \Chow\left(\Gr(d,n)\right) \subseteq \Chow\left(\PP^{\binom{n}{d}-1}\right),\] and similarly for $b_i$.  We will do this by explicitly describing $\lim a_i(Z_0)$ and $\lim b_i(Z_t)$ in terms of $Z_0$.

Following Kapranov, let $G_j^+ \subseteq \Gr(d,n)$ be the locus of linear subspaces containing the $j^{\text{th}}$ coordinate axis, and let $G_j^- \subseteq \Gr(d,n)$ be the locus of linear subspaces contained in the hyperplane where the $j^{\text{th}}$ coordinate is zero.  Then, as noted in \cite[Proposition 1.6.10]{Kap93Chow}, \[\Gr(d-1,n-1) \cong G_j^+ = \Gr(d,n) \cap \Pi_j^+\] where $\Pi_j^+ \subseteq \PP^{\binom{n}{d}-1}$ is the coordinate linear subspace defined by $x_J = 0$ for $J \not\owns j$, and \[\Gr(d,n-1) \cong G_j^- =  \Gr(d,n)\cap \Pi_j^-\] where $\Pi_j^-$ is the coordinate linear subspace defined by $x_J = 0$ for $J \owns j$.  In our setting we shall need to consider certain intersections of these sub-Grassmannians, so let
\[\Pi_I^{\pm} := \bigcap_{j\in I}\Pi_j^{\pm}\text{ and }G_I^{\pm} := \bigcap_{j\in I}G_j^{\pm} = Gr(d,n) \cap \Pi_I^{\pm}.\]

We claim that 
\[\lim_{t\rightarrow 0}a_i(Z_t) = Z_0 \cap \Pi_I^+\text{ and }\lim_{t\rightarrow 0}b_i(Z_t) = Z_0 \cap \Pi_I^-.\]
Verifying this claim will establish the theorem, by the aforementioned valuative criterion.

The argument in Kapranov's \cite[Lemma 1.6.13]{Kap93Chow} applies equally well for diagonal subtori and shows that for $t\ne 0$ we have $a_i(Z_t) = Z_t \cap \Pi_I^+$ and $b_i(Z_t) = Z_t \cap \Pi_I^-$, and from this it immediately follows from elementary topology that 
\begin{equation}\label{eq:limits}
\lim_{t\rightarrow 0}a_i(Z_t) \subseteq Z_0 \cap \Pi_I^+\text{ and }\lim_{t\rightarrow 0}b_i(Z_t) \subseteq Z_0 \cap \Pi_I^-,
\end{equation}
We claim that in both cases the intersection on the right has the same dimension as the limit on the left, namely $m-2$ (the diagonal $\Gm$ where all torus coordinates are equal acts trivially so a full-dimensional orbit has dimension one less than the rank of the torus).  To see, first note that by Proposition \ref{prop:cycles} we can write $Z_0 = \sum_{j=1}^\ell \overline{S\cdot L_j}$ for linear subspaces $L_j$ whose $S$-orbits have full dimension $m-1$.  Then \[Z_0 \cap \Pi_I^{\pm} = \sum_{j=1}^\ell\left(\overline{S\cdot L_j} \cap \Pi_I^{\pm}\right).\]  If the dimension of this intersection were not equal to $m-2$ it would have to be dimension $m-1$, the dimension of $Z_0$, which means for at least one $j$ we would have $L_j \subseteq \Pi_I^{\pm}$,   But this would mean that the $S$-orbit of this $L_j$ is not full-dimensional, contradicting our assumption on it.  Indeed, if $L_j \subseteq \Pi_I^+$ then the rank one subtorus of $S$ where all $\Gm$ factors except for the $i^{\text{th}}$ are trivial is in the stabilizer of $L_j$, since this $\Gm$ subtorus rescales equally by $t^{r_i}$ the Pl\"ucker coordinates $x_J$ where $J \supseteq I$ and by definition of $\Pi_I^+$ all remaining Pl\"ucker coordinates are zero; similarly, if $L_j \subseteq \Pi_I^-$ then this same $\Gm$ factor is in the stabilizer of $L_j$, since here it acts trivially on the Pl\"ucker coordinates $x_J$ where $J \cap I = \varnothing$ and by definition of $\Pi_I^-$ all remaining Pl\"ucker coordinates are zero.

For each of the containments in Equation \eqref{eq:limits}, since the dimensions of both sides are equal, to prove that the containment is an equality it suffices to prove that the degrees of both sides are equal.  Now, $\lim_{t\rightarrow 0} a_i(Z_t)$ is a limit of generic $S_i$-orbit closures so it has the same degree as a generic orbit closure $\overline{S_i\cdot L}$, $L\in\Gr(d-r_i,n-r_i)^0$.  But $\overline{S_i\cdot L}$ is a toric variety so its degree is the volume of the lattice polytope $\Delta_{\overline{S_i\cdot L}}$, and since $L$ here is generic this lattice polytope is the full linearly projected hypersimplex $\lambda_{\pi_{[m]\setminus i}\vec{r}}\left(\Delta(d-r_i,n-r_i)\right)$, where $\lambda_{\pi_{[m]\setminus i}\vec{r}} : \mathbb{R}^{n-r_i} \rightarrow \mathbb{R}^{m-1}$ is the linear projection map corresponding to the diagonal subtorus $S_i$ of the maximal torus acting on $\Gr(d-r_i,n-r_i)$.  On the other hand, by Proposition \ref{prop:cycles} for the limit cycle $Z_0 = \sum_{j=1}^\ell \overline{S\cdot L_j}$ the lattice polytopes $\Delta_{\overline{S\cdot L_1}},\ldots,\Delta_{\overline{S\cdot L_\ell}}$ form a polyhedral decomposition of $\lambda_{\vec{r}}\left(\Delta(d,n)\right)$.  Then the lattice polytopes $\Delta_{\overline{S\cdot L_1}} \cap \lambda_{\vec{r}}(\Gamma_I^+),\ldots,\Delta_{\overline{S\cdot L_\ell}}\cap \lambda_{\vec{r}}(\Gamma_I^+)$ form a polyhedral decomposition of the face $\lambda_{\vec{r}}(\Gamma_I^+)$ of $\lambda_{\vec{r}}(\Delta(d,n))$, where $\Gamma_I^+ := \cap_{j\in I}\Gamma_j^+$ and $\Gamma_j^+$ is the face of $\Delta(d,n)$ that Kapranov identified in \cite[Proposition 1.6.10]{Kap93Chow} as the image under the moment map $\mu_T$ of $G_j^+ \subseteq \Gr(d,n)$.  We claim
\begin{eqnarray*}
\deg(Z_0 \cap \Pi_I^+) \le \sum_{j=1}^\ell \deg\left(\overline{S\cdot L_j} \cap \Pi_I^+ \right) = \sum_{j=1}^\ell \vol\left(\Delta_{\overline{S\cdot L_j}}\cap \Gamma_I^+\right) = \vol\left(\lambda_{\vec{r}}(\Gamma_I^+)\right)\\
 = \vol\left(\lambda_{\pi_{[m]\setminus i}\vec{r}}\left(\Delta(d-r_i,n-r_i)\right)\right).
\end{eqnarray*}
Indeed, the inequality here allows for the possibility that some of these intersected orbit closures are not full-dimensional, the first equality is Kapranov's observation in \cite[Proposition 1.6.10]{Kap93Chow} about the interplay between the moment map and the sub-Grassmannians $G_j^+$, the second equality is due to the above observation about having a polyhedral decomposition, and the final equality follows from the observation that the moment map $\mu_S$ restricted to the sub-Grassmannian $\Gamma_I^+ \cong \Gr(d-r_i,n-r_i)$ is identified by this isomorphism with the moment map $\mu_{S_i} = \lambda_{\pi_{[m]\setminus i}\vec{r}}\circ \mu_T'$ where $T'$ is the maximal torus acting on $\Gr(d-r_i,n-r_i)$.  This concludes the argument for $a_i$, and the volume calculation for $b_i$ is entirely analogous.
\end{proof}

\section{Generalized Gelfand-MacPherson correspondence and Gale duality}

In \cite{Tha99} Thaddeus studies an interesting classical geometric situation related to the configuration spaces studied by Kapranov in \cite{Kap93Chow}, and while doing so he proves a handful of results that are in close analogy with results in Kapranov's paper---but in almost all cases, the proofs Thaddeus provides are new, not merely adaptations of Kapranov's.  In particular, when studying Chow quotients Thaddeus avails himself of the functorial machinery developed by Koll\'{a}r in \cite{Kollar-chow}, obviating the need to rely on the analytic methods for working with Chow varieties that were the only option for Kapranov at the time his paper was written.  We adapt here one particular proof of Thaddeus (and a particularly clever one at that) which in our setting yields the generalized Gelfand-MacPherson isomorphism Theorem \ref{thm:genGM} stated in the introduction.  Note that by specializing to the maximal torus this yields an explicit Thaddeus-esque proof of Kapranov's original Chow-theoretic Gelfand-MacPherson isomorphism \cite[Theorem 2.2.4]{Kap93Chow}.

\begin{proof}[Proof of Theorem \ref{thm:genGM}]
The basic idea is, quite like the usual Gelfand-MacPherson correspondence, to observe that the $\GL_d$-action on the affine space of $n\times d$ matrices (we have taken a transpose here to work with sub rather than quotient objects, but that is immaterial and just to ease notation) commutes with the torus action; taking the $\GL_d$ quotient first yields the Grassmannian $\Gr(d,n)$, whereas taking the $S$-quotient first projectivizes the size $r_i \times d$ matrix blocks, $i=1,\ldots,m$, of this space of matrices resulting in a product of projective spaces.  In fact, this already shows that the two sides of the claimed isomorphism are birational, so the work is to extend this birational map to an isomorphism.  To do this, we follow and mildly adapt the argument of Thaddeus in his proof in \cite[\S6.3]{Tha99}.  The main insight in Thaddeus' proof, translated to our situation, is that the two rational quotient maps 
\begin{equation}\label{eq:SLquot}\PP\Hom(k^d,k^n) \dashrightarrow \Gr(d,n)\end{equation} 
and 
\begin{equation}\label{eq:Tquot}\PP\Hom(k^{d},k^{n}) \dashrightarrow \prod_{i=1}^m\PP\Hom(k^d,k^{r_i}) \end{equation}
have different base loci, and by resolving both it is easier to compare cycles by using pullback and pushforward properties of the Chow variety.  We now go through these details in earnest.

The rational $\GL_d$-quotient map in \eqref{eq:SLquot} sends an injective linear map $\varphi : k^d \hookrightarrow k^n$ to $[\varphi(k^d)]\in \Gr(d,n)$, the point in the Grassmannian corresponding to the image of this linear map; the base locus is the set of linear maps $k^d \rightarrow k^n$ with nontrivial kernel.  Let $\mathcal{S}_{d,n} \rightarrow \Gr(d,n)$ denote the universal sub-bundle over the Grassmannian.  Then the rational $\GL_d$-quotient map is resolved by the space $\PP\Hom(k^{d},\mathcal{S}_{d,n})$:
\[\xymatrix{ & \PP\Hom(k^d,\mathcal{S}_{d,n})\ar[dr]\ar[dl] & \\ \PP\Hom(k^{d},k^{n}) \ar@{-->}[rr] & & \Gr(d,n)}\]
Indeed, the fiber over a point $\varphi : k^d \rightarrow k^n$ of $\PP\Hom(k^d,k^n)$ is a single point of $\PP\Hom(k^d,\mathcal{S}_{d,n})$ if $\varphi$ is injective, namely $\varphi$ viewed as a map from $k^d$ to its image $\varphi(k^d) \subseteq k^n$, whereas if $\dim\varphi(k^d) < d$ then the fiber in $\PP\Hom(k^d,\mathcal{S}_{d,n})$ is in bijection with all $d$-dimensional subspaces $L\subseteq k^n$ containing $\varphi(k^d) \subseteq k^n$, since for each such $L \supseteq \varphi(k^d)$ we have the element of the fiber given by viewing $\varphi$ as a map from $k^d$ to $L$.  In fact, $\PP\Hom(k^d,\mathcal{S}_{d,n})$ is the iterated blow-up of $\PP\Hom(k^d,k^n)$ along the locus of non-full rank maps, ordered in increasing order of rank.  Note that the morphism to $Gr(d,n)$ is a $\PP^{d^2-1}$-bundle; in particular, it is flat.

On the other hand, the rational $S$-quotient map (\ref{eq:Tquot}) is resolved by the $\PP^{m-1}$-bundle given by the projectivization of the total space the direct sum of the dual line bundles to the tautological bundles:
\[\xymatrix{ & \PP\left(\bigoplus_{i=1}^m\mathcal{O}(e_i)\right)\ar[dr]\ar[dl] & \\ \PP\Hom(k^{d},k^{n}) \ar@{-->}[rr] & & \prod_{i=1}^m\PP\Hom(k^d,k^{r_i})}\]
Here $\mathcal{O}(e_j)$ denotes the pull-back of $\mathcal{O}(1)$ along the $j^{\text{th}}$ projection \[\prod_{i=1}^m\PP\Hom(k^d,k^{r_i}) \rightarrow \PP\Hom(k^d,k^{r_j}) \cong \PP^{r_jd-1}.\]  
One can see this as follows.  The base locus for this map consists of matrices where any of the $r_i\times d$ blocks (corresponding to the diagonal subtorus action) are entirely zero, so to resolve this map we need to blow up this locus.  Since it is a union of linear subspaces meeting transversely, this can be done one subspace at a time, in any order, and we thus reduce to the standard observation that the total space of $\mathcal{O}(1)$ on any projective space $\PP^{\ell}$ is the blow-up of $\mathbb{A}^{\ell+1}$ at the origin.  

Putting this together, we get the following commutative diagram:
\[\xymatrix{
\PP\left(\bigoplus_{i=1}^m\mathcal{O}(e_i)\right)\ar[dd]\ar[dr] & & \PP\Hom(k^{d},\mathcal{S}_{d,n})\ar[dl]\ar[dd]\\
 & \PP\Hom(k^{d},k^{n})\ar@{-->}[dl]\ar@{-->}[dr] & \\
\prod_{i=1}^m\PP\Hom(k^d,k^{r_i})\ar@{-->}[d] & & Gr(d,n)\ar@{-->}[d] \\
\left(\prod_{i=1}^m\PP\Hom(k^d,k^{r_i})\right)\ChowQ\GL_{d} & & Gr(d,n)\ChowQ S
}\]
Here the vertical morphisms are both projective space bundles, the diagonal morphisms are birational, and the dashed arrows are all rational quotient maps---on the left by the torus first then $\GL_{d}$, and on the right by $\GL_{d}$ first then the torus.

The rest of Thaddeus' argument now goes through essentially verbatim.  The universal family of cycles on $\Gr(d,n)$ over the Chow quotient $\Gr(d,n)\ChowQ S \subseteq \Chow\left(\Gr(d,n)\right)$ pulls back along the flat morphism to a family of cycles on $\PP\Hom(k^d,\mathcal{S}_{d,n})$ over $\Gr(d,n)\ChowQ S$ with general fiber a $(\GL_{d}\times S)$-orbit closure.  This family pushes forward along the birational morphism to an $S$-invariant family of cycles on $\PP\Hom(k^d,k^n)$.  The restriction of the cycles in this family to the complement of the base locus of the torus quotient map (\ref{eq:Tquot}) pushes forward along this quotient map, a geometric quotient, and yields a family of cycles on $\prod_{i=1}^m\PP\Hom(k^d,k^{r_i})$ over $\Gr(d,n)\ChowQ S$.  Since $\Gr(d,n)\ChowQ S$ is reduced and the cycles over it in this last family all have the expected dimension, there is an induced morphism \[\Gr(d,n)\ChowQ S \rightarrow \Chow\left(\prod_{i=1}^m\PP\Hom(k^d,k^{r_i})\right)\]
by \cite[Theorem 3.21]{Kollar-chow}.  A general point of this Chow quotient gets sent to a $\GL_{d}$-orbit closure, so the image of this morphism is contained in the Chow quotient $\left(\prod_{i=1}^m\PP\Hom(k^d,k^{r_i})\right)\ChowQ\GL_{d}$.  On the other hand, the same argument applied symmetrically to other side of the above big commutative diagram yields a morphism between these Chow quotients in the other direction.  Since these Chow quotients are separated varieties, to show that these morphisms are inverse to each other, and hence that the two Chow quotients are isomorphic, it suffices to show that they are inverse on open dense loci.  For this we apply the naive argument discussed at the beginning of this proof, regarding commuting group actions, to see that indeed these maps identify generic orbit closures.  
\end{proof}

An immediate corollary of this is the generalized Gale duality Corollary \ref{cor:genGale} stated in the introduction.  Indeed, the orthogonal complement isomorphism $\Gr(d,n) \cong \Gr(n-d,n)$ is torus-equivariant so descends to an isomorphism $\Gr(d,n)\ChowQ S \cong \Gr(n-d,n)\ChowQ S$ of Chow quotients for any subtorus $S$, and applying our generalized Gelfand-MacPherson isomorphisms to both sides of this isomorphism provides our generalized Gale duality isomorphism.

\begin{remark}
For parameters $m,d,d_1,\ldots,d_m$ such that
\[2d - m = \sum_{i=1}^m d_i,\]
our generalized Gale duality sends configurations of $m$ parameterized linear subspaces of dimensions $d_1,\ldots,d_m$ in $\PP^d$ to configurations of $m$ parameterized linear subspaces of dimensions $d_1,\ldots,d_m$ in $\PP^d$, so in this situation one could study ``self-associated'' configurations, generalizing the maximal torus case studied by Kapranov in \cite[Paragraph (2.3.9)]{Kap93Chow} (see also \cite[\S II]{Eisenbud-Popescu} for another setting for self-association).
\end{remark}

\section{The Borel transfer principle and the Chen-Gibney-Krashen moduli space }

Consider a connected unipotent group $H$, and suppose $G$ is a reductive group containing $H$ as a closed subgroup.  The quotient $G/H$, where $H$ acts on the right, is a quasi-affine variety (and if $H$ is positive-dimensional then it is not affine); it admits a natural embedding in the affinization \[(G/H)^\text{aff} := \Spec \mathcal{O}_{G/H}(G/H) = \Spec \mathcal{O}_G(G)^H\] which is a scheme possibly of infinite type since the ring of invariants of a non-reductive group need not be finitely generated.

\begin{example}\label{ex:nonred}
Let \[G := \Spec k[x_{11},x_{12},x_{21},x_{22}, (x_{11}x_{22} - x_{12}x_{21})^{-1}] \cong \GL_2\] be the affine group variety of $2\times 2$ invertible matrices, and let $H := \Spec k[s] \cong \Ga$ be the subgroup of unipotent matrices of the form $\left(\begin{array}{cc}1 & s \\0 & 1\end{array}\right)$.  The quotient $G/H$ is the quasi-affine variety $\mathbb{A}^2\setminus\{0\}$, because the affinization is \[(G/H)^\text{aff} = \Spec k[x_{11},x_{12},x_{21},x_{22}, (x_{11}x_{22} - x_{12}x_{21})^{-1}]^H = \Spec k[x_{11},x_{21}] \cong \mathbb{A}^2\] but the image of the quotient morphism $G \rightarrow (G/H)^\text{aff}$ does not include the origin since a matrix where $x_{11}$ and $x_{21}$ are both zero is not invertible.  In this case the affinization is of finite type.
\end{example}

Continue to let $G$ and $H$ be a reductive group and unipotent subgroup as above, and suppose now that $X$ is an affine variety with an $H$-action that extends to a $G$-action.  The classical Borel transfer principle states, in the language of (non-reductive) GIT, that there is an isomorphism \[X\quotient H \cong \left((G/H)^\text{aff} \times X\right)\quotient G,\] where $G$ acts diagonally on this product, with the $G$-action on $(G/H)^\text{aff}$ induced by left-multiplication of $G$ on itself, and the symbol ``$\quotient$'' simply means to take Spec of the ring of invariants \cite[\S5.1]{Doran-Kirwan}.  This allows one to replace a non-reductive invariant ring with a reductive invariant ring, though in the process one replaces the $k$-algebra being acted upon with one that need not be finitely generated.  This is often a useful tradeoff as it means instead of studying the $H$-action of $X$, it suffices to study the typically simpler $H$-action on $G$ together with the (again, typically simpler) $G$-action on $X$.  This geometric formulation of the Borel transfer principle has been globalized to the case that $X$ is projective in \cite[\S5.1]{Doran-Kirwan}.  

The definition of a Chow quotient is perfectly valid for any algebraic group, not just reductive groups, so a natural question, which seems not to have appeared in the literature previously, is whether this global Borel transfer principle for GIT quotients extends to Chow quotients:
\begin{question}\label{q:borel}
Let $G$ be a reductive group containing a connected unipotent closed subgroup $H$, let $\overline{G/H}$ be a projective completion of the quotient, and let $X$ be a projective variety with a $G$-action.  Is there an isomorphism 
\[X\ChowQ H \stackrel{?}{\cong} \left(\overline{G/H} \times X\right)\ChowQ G,\]
or at least are there reasonable hypotheses guaranteeing such an isomorphism?
\end{question}
The projective completion here is needed since Chow varieties are only defined for projective varieties.  In what follows we show that a specific instance of this question is an open question about a Grassmannian Chow quotient first asked (in casual conversation) by Krashen.

Consider the diagonal subtorus action on $\Gr(d,n)$ defined by $\vec{r} = (d-1,1,\ldots,1)$, so that $S$ is the rank $n-d+2$ torus that acts by rescaling the first $d-1$ columns of a matrix together and the last $n-d+1$ columns individually.  By our generalized Gelfand-MacPherson isomorphism (Theorem \ref{thm:genGM}) we have
\begin{equation}\label{eq:ChowQ}
\Gr(d,n)\ChowQ S \cong \left(\PP\Hom(k^{d-1},k^d) \times (\PP^{d-1})^{n-d+1}\right)\ChowQ \GL_d,
\end{equation}
a compactification of the configuration space of $n-d+1$ points and a parameterized hyperplane in $\PP^{d-1}$.  On the other hand, the Chen-Gibney-Krashen moduli space $T_{d-1,n-d+1}$ is a compactification of the same configuration space \cite{CGK05}, and Krashen's question is whether these are isomorphic.  In \cite{Gallardo-Giansiracusa} it is shown that $T_{d-1,n-d+1}$ is isomorphic to the normalization of the Chow quotient $(\PP^{d-1})^{n-d+1}\ChowQ H$, where $H \cong \Gm^2 \rtimes \Ga^{d-1}$ is the non-reductive subgroup of $\GL_d$ fixing a hyperplane pointwise.  Since this $H$-action extends to the standard $\GL_d$-action, we can apply Question \ref{q:borel} and ask whether this non-reductive Chow quotient is isomorphic to the reductive Chow quotient $\left(\overline{\GL_d/H} \times (\PP^{d-1})^{n-d+1}\right)\quotient \GL_d$.  The following lemma describes $\overline{\GL_d/H}$ and the induced group actions and implies that this reductive Chow quotient is precisely the one appearing in our generalized Gelfand-MacPherson correspondence, the right side of Equation \eqref{eq:ChowQ}, and hence as claimed that the Krashen question is a specific instance of Question \ref{q:borel}:

\[\xymatrix{ T_{d-1,n-d+1} \ar@{=}[d] \ar@{=}[r]^? & \Gr(d,n)\ChowQ S \ar@{=}[d] \\ (\PP^{d-1})^{n-d+1} \ChowQ H \ar@{=}[r]_(0.39){?} & \left(\overline{\GL_d/H} \times (\PP^{d-1})^{n-d+1}\right)\quotient \GL_d }\]

The left vertical equality (up to normalization) here is \cite{Gallardo-Giansiracusa}, the right vertical equality is the following lemma together with the Gelfand-MacPherson isomorphism, the top horizontal equality is the Krashen question, and the bottom horizontal equality is a special instance of Question \ref{q:borel}.

\begin{lemma}\label{lem:Borel}
For the right-multiplication action of $H$ on $\GL_{d}$, the quotient $\GL_d/H$ is isomorphic to the open subvariety of $\PP\Hom(k^{d-1},k^{d})$ consisting of projective equivalence classes of full rank $d\times (d-1)$ matrices.  The left-multiplication action of $\GL_d$ on itself descends to an action on this quotient corresponding, via this isomorphism, to left matrix multiplication.
\end{lemma}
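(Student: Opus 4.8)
The strategy is to realize $\GL_d/H$ as the projectivization of a single $\GL_d$-orbit in a space of matrices. First I fix coordinates so that the hyperplane being fixed pointwise is $\PP(W)$ for $W=\langle e_1,\dots,e_{d-1}\rangle\subseteq k^d$; requiring $g\in\GL_d$ to act as the identity on every point of $\PP(W)$ forces $g|_W$ to be a scalar, so $H$ is exactly the closed subgroup
\[
H=\left\{\begin{pmatrix}\mu I_{d-1} & v\\ 0 & \lambda\end{pmatrix}\ :\ \mu,\lambda\in\Gm,\ v\in k^{d-1}\right\}\cong\Gm^2\rtimes\Ga^{d-1},
\]
of dimension $d+1$; as a sanity check, $\dim\GL_d-\dim H=d^2-d-1=\dim\PP\Hom(k^{d-1},k^d)$.

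Next I introduce the morphism $q\colon\GL_d\to\PP\Hom(k^{d-1},k^d)$ sending $g$ to the projective class of the $d\times(d-1)$ matrix $g\iota$, where $\iota\colon k^{d-1}\hookrightarrow k^d$ is the inclusion of the first $d-1$ coordinates; concretely $q(g)$ records the first $d-1$ columns of $g$ up to simultaneous scaling. Two elementary checks: (i) those columns are linearly independent since $g$ is invertible, and conversely any rank-$(d-1)$ matrix can be completed to an invertible matrix by adjoining a vector outside its column span and can be rescaled within its projective class while staying full rank, so the image of $q$ is precisely the open subvariety $\PP\Hom(k^{d-1},k^d)^{\mathrm{fr}}$ of full-rank classes asserted in the lemma; (ii) $q(g)=q(g')$ holds iff $g'\iota=c\,g\iota$ for some $c\in\Gm$, iff the first $d-1$ columns of $g^{-1}g'$ equal $ce_1,\dots,ce_{d-1}$, iff $g^{-1}g'\in H$. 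Hence the fibers of $q$ are exactly the right $H$-cosets, and $q$ descends to a bijective morphism $\bar q\colon\GL_d/H\to\PP\Hom(k^{d-1},k^d)^{\mathrm{fr}}$.

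To promote $\bar q$ to an isomorphism I factor $q$ through genuine principal bundles. The group $\GL_d$ acts transitively by left multiplication on the affine variety of full-rank $d\times(d-1)$ matrices, with stabilizer of $\iota$ equal to $H_1\subseteq H$, the subgroup whose upper-left block is $I_{d-1}$ (so $H_1\cong\Gm\ltimes\Ga^{d-1}$); thus $\GL_d\to\GL_d/H_1$ is a Zariski-locally-trivial principal $H_1$-bundle identifying $\GL_d/H_1$ with that affine variety. Composing with the free scaling $\Gm$-quotient onto $\PP\Hom(k^{d-1},k^d)^{\mathrm{fr}}$ — whose scaling action is induced by the central $\Gm\subseteq\GL_d$, and $H_1$ together with this central $\Gm$ generates all of $H$ — exhibits $\PP\Hom(k^{d-1},k^d)^{\mathrm{fr}}$ as a geometric quotient of $\GL_d$ by $H$, so $\bar q$ is an isomorphism. (Since we are over $\CC$, one could instead simply note that a bijective morphism from the smooth homogeneous space $\GL_d/H$ to a smooth target is an isomorphism.) Finally, left multiplication of $\GL_d$ on itself commutes with right multiplication by $H$, hence descends to $\GL_d/H$, and transports under $\bar q$ to $A\cdot[g\iota]=[(Ag)\iota]$, i.e.\ ordinary left matrix multiplication on $\PP\Hom(k^{d-1},k^d)$. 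I expect the only step requiring real care to be this last upgrade — checking that $q$ is a genuine quotient map rather than merely a bijective morphism — and it is routine given the special (hence Zariski-locally-trivial) structure of $H_1$ and the central $\Gm$.
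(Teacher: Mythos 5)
Your proof is correct, and it reaches the same explicit model ($\GL_d/H \cong$ projectivized full-rank $d\times(d-1)$ matrices, obtained by forgetting one column of $g$) by a genuinely different route. The paper computes the quotient in stages, $(\GL_d/\Ga^{d-1})/\Gm^2$: it writes down the ring of invariants $\mathcal{O}(\GL_d)^{\Ga^{d-1}}$ explicitly (generated by the entries outside the distinguished column), observes that the resulting affine categorical quotient is all of $\Hom(k^{d-1},k^d)$ while the image of the quotient morphism is only the full-rank locus, and then projectivizes by the residual $\Gm^2$. That computation is deliberately parallel to the paper's Example 5.1 and to the Borel-transfer theme of the section (non-reductive invariant rings whose quotient morphisms fail to surject), and it is what motivates taking $\overline{\GL_d/H}=\PP\Hom(k^{d-1},k^d)$ as the completion. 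You instead avoid invariant theory entirely: you exhibit the full-rank locus as a single $\GL_d$-orbit with stabilizer $H_1\cong\Gm\ltimes\Ga^{d-1}$, so that $\GL_d\rightarrow\GL_d/H_1$ is a (Zariski-locally trivial, since $H_1$ is special) principal bundle onto the space of full-rank matrices, and then quotient by the residual free scaling $\Gm$. Your version makes the assertion that this is a genuine \emph{geometric} quotient more airtight --- the paper's invariant-ring computation per se only identifies the affine categorical quotient, and one must still accept that restricting to the image recovers the homogeneous space $\GL_d/H$ --- at the cost of losing the explicit non-reductive-invariants computation that ties the lemma to Question 5.2. Both arguments handle the final equivariance claim identically (left and right multiplication commute).
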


Certainly the most natural projective completion to take for the space of full rank matrices is its Zariski closure in the space of all matrices, hence $\overline{\GL_d/H} = \PP\Hom(k^{d-1},k^d)$.

\begin{proof}
If we choose coordinates so that the fixed hyperplane is defined by the vanishing of the first coordinate, then $H \cong \Gm^2 \rtimes \Ga^{d-1}$ consists of matrices of the form
\[\left(\begin{array}{cccc}t_1 &  0 & \cdots & 0 \\s_1 & t_2 &  & 0 \\
\vdots &  & \ddots &  \\s_{d-1} & 0  & \cdots & t_2 \end{array} 
\right)\] 
for $s_i \in k$ and $t_i\in k^\times$.

Since the additive action is normalized by the torus action, we can compute the quotient in stages:
\[\GL_d/H \cong (\GL_d/\Ga^{d-1})/\Gm^2.\]
We claim $\GL_d/\Ga^{d-1}$ is the space of full rank $d\times (d-1)$ matrices.  Indeed, by viewing \[\GL_d \subseteq \Hom(k^d,k^d) \cong \mathbb{A}^{d^2}\] as the affine open complement of the hypersurface $\det = 0$, the ring of invariants for the $\Ga^{d-1}$-action is generated by all entries of the matrix except for those of the first column.  Thus the categorical quotient, in the category of affine varieties, is \[\GL_d\quotient \Ga^{d-1} \cong \Hom(k^{d-1},k^{d}) \cong \mathbb{A}^{(d-1)d}.\] However, similar to the situation in Example \ref{ex:nonred}, since this is a non-reductive quotient the quotient morphism need not be surjective, and indeed in the present situation its image is manifestly the set of full rank matrices.  

The residual $\Gm^2$-action on this space of full rank $d\times (d-1)$ matrices has the $\Gm$ factor corresponding to $t_1$ acting trivially and the $\Gm$ factor corresponding to $t_2$ acting by rescaling all entries equally, so the quotient by $\Gm^2$ is simply the projectivization.  The assertion about the induced left-multiplication action of $GL_d$ on this space of matrices follows immediately from our explicit description of the quotient in terms of invariants as the rightmost $d-1$ columns of a square $d\times d$ matrix of indeterminates.
\end{proof}

\bibliographystyle{amsalpha}
\bibliography{Diag}

\end{document}